\newtheorem{theorem}{Theorem}[section]
\newtheorem{lemma}[theorem]{Lemma}
\DeclareMathAlphabet{\mathpzc}{OT1}{pzc}{m}{it}
\newcommand{\xv}{\textbf{x}}
\newcommand{\yv}{\textbf{y}}
\newcommand{\zv}{\textbf{z}}
\newcommand{\wv}{\textbf{w}}
\newcommand{\xt}{\textbf{x}^T}
\newcommand{\m}{\mathfrak{m}}
\newcommand{\be}{\bm{\beta}}
\newcommand{\al}{\alpha}
\newcommand{\La}{\Lambda}
\newcommand{\abs}[1]{\displaystyle\left\lvert #1 \right\rvert}
\begin{document}
\title{Quadratic Forms in Prime Variables with small off-diagonal ranks}
\author{Jakub Dobrowolski}
\begin{abstract}
The main goal of this note is to establish the limits of L. Zhao's techniques for counting solutions to quadratic forms in prime variables. Zhao considered forms with rank at least 9, and showed that these equations have solutions in primes provided there are no local obstructions. We consider in detail the degenerate cases of off-diagonal rank 1 and 2, and improve the rank lower bounds to at least 6 and at least 8 respectively. These results complement a recent breakthrough of Green on the non-degenerate rank 8 case.
\end{abstract}
\maketitle
\section{Introduction}
This entire paper concentrates on a problem of finding solutions to the equation
\begin{equation}\label{equation}
\textbf{x}^T A \textbf{x}=t    
\end{equation}
where $A\in M_{n}(\mathbb{Z})$ is a symmetric matrix, $t\in\mathbb{Z}$ and $\textbf{x}$ is a vector with all entries being prime. This problem was first looked at by Hua \cite{Hua}. He proved that in the case of $A$ an identity matrix, $n=5$ and $t\equiv 5$ (mod 24) sufficiently large we can always find a solution to equation (1) in primes. Not long ago Jianya Liu \cite{Liu} was able to handle the more general case. For $A$ satisfying $\operatorname{rank}(A)\geq 10$ and some mild conditions, he was able to prove that an asymptotic number of solutions to (1) in primes in a box $[0,X]^n$ is at least $C_AX^{n-2}(\log X)^{-n}$ where $C_A>0$ is some constant. 
Lilu Zhao \cite{Zhao} was able to prove that $\operatorname{rank}(A)\geq 9$ suffices for all regular matrices $A$. We will be mostly using his techniques throughout this paper. Most recently Ben Green \cite{Green} was able to prove using techniques from representation theory that under some mild non-degeneracy conditions if $\operatorname{rank}(A)=8$ and $A$ has an off-diagonal submatrix of rank 4 then we can find $C_AX^{6}(\log X)^{-8}$ solutions to ({\upshape\ref{equation}}) in a box $[0,X]^8$ with $C_A>0$.\\
\indent In this paper we will be working with 
\begin{equation}
    \mathcal{N}_{A,t}(X)=\sum_{\substack{1\leq x_1,...,x_n\leq X\\ \xt A\xv=t}} \prod_{i=1}^n \La(x_i)
\end{equation}
where $\La(x)$ is the von Mangoldt function and $\mathcal{N}_{A,t}$ is calculating number of solutions to ({\upshape\ref{equation}}) in prime powers in $[0,X]^n$ box. This weighting is rather standard and makes the sum estimates much simpler. Finally we will write $\operatorname{rank}_{\operatorname{off}}(A)=k$ if the biggest rank of a submatrix of $A$ which does not contain diagonal entries is exactly $k$. We can now state main theorem of this paper.\newpage
\begin{theorem}
Let $\mathfrak{S}(A,t)$ be defined as in ({\upshape\ref{sing}}) and $A$ be an indefinite matrix satisfying one of the following:\\
(i) $\operatorname{rank}(A)\geq 6$ and $\operatorname{rank}_{\operatorname{off}}(A)=1$;\\
(ii) $\operatorname{rank}(A)\geq 8$ and $\operatorname{rank}_{\operatorname{off}}(A)=2$.\\ Then:
    \begin{equation}
        \mathcal{N}_{A,t}(X) =\mathfrak{S}(A,t)X^{n-2}+O_K(X^{n-2}(\log X)^{-K})
    \end{equation}
    where $K$ is an arbitrarily large constant.
\end{theorem}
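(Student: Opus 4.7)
The natural approach is the Hardy--Littlewood circle method, in the framework developed by Liu and refined by Zhao. Setting
$$S(\al) = \sum_{\xv \in [1,X]^n} \Big(\prod_{i=1}^n \La(x_i)\Big) e(\al\, \xt A \xv),$$
one has $\mathcal{N}_{A,t}(X) = \int_0^1 S(\al) e(-\al t)\, d\al$, and I would dissect $[0,1]$ into major arcs $\mathfrak{M}$ centred at rationals $a/q$ with $q \le Q := (\log X)^{C}$ and minor arcs $\mathfrak{m} = [0,1]\setminus \mathfrak{M}$, with $C$ depending on $K$. The major arc contribution should be essentially routine: by Siegel--Walfisz one replaces the von Mangoldt weights by their mean values on residue classes, expands $\xt A \xv$ modulo $q$, and collects a product of complete Gauss sums and a smooth oscillatory integral. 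These combine to produce the main term $\mathfrak{S}(A,t)X^{n-2}$; the rank hypotheses $\operatorname{rank}(A)\ge 6$ (respectively $\ge 8$) are enough for the singular series and singular integral to converge absolutely and for the error to be of the required shape.

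All the substantive work lies in bounding $\int_{\mathfrak{m}} |S(\al)|\, d\al \ll X^{n-2}(\log X)^{-K}$. The low off-diagonal rank is exploited through a structural reduction: after permutation and a unimodular change of variables we may split $\xv = (\yv, \zv)$ with
$$\xt A \xv = \yv^T D \yv + 2 \yv^T C \zv + \zv^T B \zv,$$
where $D$ is diagonal and $C$ has rank $1$ in case (i) and rank $2$ in case (ii). Writing $C = u v^T$ in case (i), the cross term depends on $\yv$ only through the single linear form $u \cdot \yv$; in case (ii) only through two fixed linear forms. For each fixed $\zv$ the inner sum is a twisted diagonal exponential sum
$$T(\al;\zv) = \sum_{\yv} \Big(\prod_i \La(y_i)\Big) e\big(\al\, \yv^T D \yv + \al\, \ell_\zv(\yv)\big),$$
which factorises over the coordinates $y_i$; classical minor arc estimates for $\sum_{p\le X} \La(p) e(\al p^2 + \be p)$ via Vaughan's identity and Weyl differencing then yield a $(\log X)^{-K'}$ saving per coordinate.

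The principal obstacle is balancing these per-coordinate savings against the $X^{r}$ loss from the outer summation over $\zv$, where $r = \dim \zv$. In the critical cases $\operatorname{rank}(A) = 6$ and $8$ the balance is tight and direct estimation fails. The remedy I have in mind is to apply Cauchy--Schwarz in $\zv$, linearising the coupling phase to $\al\, \ell_{\zv - \zv'}(\yv)$, and then treat separately the pairs $(\zv,\zv')$ for which $\al\, \ell_{\zv-\zv'}$ is close to a rational of small denominator (a sparse ``bad'' set, controllable through a geometry-of-numbers argument) and the remaining pairs, where the diagonal minor arc bound still applies. Making the resulting $X^{r/2}$ Cauchy--Schwarz loss match exactly against the diagonal savings from the $n - r$ variables of $D$, and ensuring that the indefiniteness hypothesis on $A$ prevents degenerate Gauss-sum cancellations from spoiling the bound, is the technical heart of the proof. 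It is this bookkeeping that I expect to pin down the rank thresholds precisely at $6$ and $8$, and to be the most delicate step.
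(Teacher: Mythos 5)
Your framework (circle method, major arcs via Siegel--Walfisz as in Zhao, all the substance in the minor arcs, structural exploitation of the low off-diagonal rank, Cauchy--Schwarz) matches the paper, but the mechanism you propose for saving the crucial factor of $X^2$ on the minor arcs does not work as described, and that is where the whole difficulty lives. You intend to ``match the $X^{r/2}$ Cauchy--Schwarz loss against the diagonal savings from the $n-r$ variables of $D$,'' with the good pairs $(\zv,\zv')$ handled by the diagonal minor arc bound. But the minor arc estimate for a single weighted Weyl sum $\sum_{x\le X}\La(x)e(\al dx^2+\be x)$ (Hua's Theorem 10, Lemma~\ref{lemvaug} here) saves only an arbitrary power of $\log X$, uniformly in $\be$ --- never a power of $X$. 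The diagonal variables therefore cannot absorb any power-of-$X$ loss; in the paper they are used exclusively, via a sup-norm bound on the leftover coordinates, to produce the $(\log X)^{-K}$. The power of $X^{2}$ comes from a different source: mean-value estimates. After Cauchy--Schwarz, each $L^2$ factor is bounded by orthogonality by the number of integer solutions to a system $\xt C\xv=\yv^TC\yv$, $\xt H=\yv^TH$; the substitution $\mathbf{u}=\xv+\yv$, $\mathbf{v}=\xv-\yv$ reduces this to a homogeneous bilinear system $\xt C\yv=0$, $\xt H=0$, which has $\ll LX^{k+n-2}$ solutions whenever the bilinear form has rank at least $2$ (Lemmas~\ref{lem5}--\ref{lem7}, from Zhao). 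Each application saves a clean $X$ after the square root, and the thresholds $6$ and $8$ are precisely what is needed to keep both bilinear systems of rank $\ge 2$ while still leaving a diagonal variable with $d_i\neq 0$ outside the two $L^2$ factors. Your alternative --- a dichotomy in $\al\,\ell_{\zv-\zv'}$ with a geometry-of-numbers treatment of the bad pairs --- is not substantiated anywhere in the proposal, and as written the accounting is circular.

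A secondary but genuine error: you cannot perform ``a unimodular change of variables'' to diagonalise part of $A$, since the coordinates are constrained to be prime and any nontrivial linear substitution destroys that. What is actually needed, and proved in the paper, is that the hypothesis $\operatorname{rank}_{\operatorname{off}}(A)=1$ (resp.\ $2$) already forces $A$, in its given coordinates, to be diagonal plus a rank-$1$ (resp.\ rank-$2$) correction in the large block; e.g.\ in case (i) the lower block is $D+h\bm{\xi}\bm{\xi}^T$ with the \emph{same} vector $\bm{\xi}$ occurring off the diagonal, and in case (ii) there are three structurally distinct subcases according to the ranks of the blocks $B_1,B_2$. The correction terms are then handled by detecting auxiliary linear forms such as $z=\bm{\xi}^T\yv$ through an extra orthogonality integral over $\be\in[0,1]^k$, which is what puts $S(\al)$ into the product form to which Cauchy--Schwarz and the counting lemmas apply. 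Finally, indefiniteness of $A$ is used on the major arcs (to guarantee real solutions, hence a nonvanishing singular integral), not to prevent minor-arc Gauss-sum degeneracies.
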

This result was probably known to Zhao but he didn't explicitly write down the details. With this theorem we establish the limitations of his methods for off-diagonal ranks 1 and 2.\\
\indent It's also important to point out how Theorem 1.1 relates to the result of Ben Green\cite{Green}. For $A\in GL_{8}(\mathbb{Z})$, $0\leq r=\operatorname{rank}_{\operatorname{off}}(A)\leq 4$. The case $r=0$ is a result due to Hua\cite{Hua} and for $r=1,2$\, Theorem 1.1 applies. Under mild non-degeneracy conditions the result of Ben Green deals with case $r=4$. Thus there's only case $r=3$ left to improve the restriction on $A$ to $\operatorname{rank}(A)\geq 8$.
\section{Notation}
Notation in this paper is mostly standard. We write $e(z)$ for $e^{2\pi iz}$ and we will let $X$ be a sufficiently large integer and $L=\log X$. When we write $\lvert \xv \rvert\leq X$ or $\xv\leq X$ we mean that coefficients $x_i$ of $\xv$ satisfy $\lvert x_i\rvert \leq X$ or $x_i\leq X$ respectively.\\
\indent In this paper matrix $A$ and matrices $A_i$ will always be symmetric.
We will write $M_{n,k}(S)$ for the set of $n$ by $k$ matrices with entries in $S$ and $M_n(S)$ for the set of $n$ by $n$ matrices with entries in $S$. Finally as usual we denote by $GL_n(S)$ the set of $n$ by $n$ invertible matrices with entries in $S$. \\
\indent For $\xt=(x_1,x_2,...,x_n)$ we will write $\La(\xv)=\La(x_1)\La(x_2)...\La(x_n)$.\\
Many times in this paper we will be summing over the number of integer solutions of certain linear and bilinear equations in some box. Instead of writing that as
\begin{equation}
    \sum_{\substack{equation 1\\ equation 2\\ equation3\\}}1
\end{equation}
we will write it as 
\begin{equation}
    \Bigg\{\substack{equation 1\\ equation 2\\ equation 3}\Bigg\}
\end{equation}
to put a focus on the equations themselves. It will be clear from context which unknowns are actually variables and all of them will be lying in the box $[CX,C'X]$ for $C,C' $ some real constants.
\section{General approach to the problem}
As usual in these kinds of problems we introduce the corresponding exponential sum
\begin{equation}
    S(\alpha)=\sum_{1\leq \textbf{x}\leq X} \Lambda(\textbf{x})e(\alpha \textbf{x}^T A \textbf{x})
\end{equation}
and note that
\begin{equation}
    \int_0^1 S(\alpha)e(-\alpha t)=\sum_{\substack{1\leq \textbf{x}\leq X \\ \xt A \xv=t}} \Lambda(\textbf{x})=\mathcal{N}_{A,t}(X).
\end{equation}
For $a,q$ coprime we will let:
\begin{equation}
    \mathcal{M} (a,q;Q)=\{\al :\lvert \al-\tfrac{a}{q}\rvert\leq \tfrac{Q}{qX^2} \}.
\end{equation}
For fixed $Q\leq X/2$ sets $\mathcal{M} (a,q;Q)$ are disjoint for different $a$ and $q$ provided that $q\leq Q$. We can then write
\begin{equation}
    \mathcal{M}(Q)=\bigcup_{q=1}^Q\bigcup_{\substack{a=1 \\ (a,q)=1}}^q\mathcal{M}(a,q;Q)
\end{equation}
We define major arcs to be $\mathfrak{M}=\mathcal{M}(P)$ where $P=L^K$. In this paper $K$ and $K'$ will denote arbitrarily large parameters independent of $X$, satisfying $K>K'$, which may change from line to line. \\
Finally we will let $\mathfrak{m}=[X^{-1},1+X^{-1}]\setminus \mathfrak{M}$ be the minor arcs.\\
\indent Let's define a few quantities:
\vspace{2mm}
\begin{equation}
C(q,a)=\sum_{\substack{1\leq \mathbf{h}\leq q\\ (\mathbf{h},q)=1}} e\Big(\mathbf{h}^T A\mathbf{h}\frac{a}{q}\Big),    
\end{equation}
\begin{equation}\label{sing}
    \mathfrak{S}(A,t)=\sum_{q=1}^{\infty}\frac{1}{\varphi (q)}\sum_{\substack{1\leq a\leq q\\ (a,q)=1}}C(q,a)e\Big(-\frac{at}{q}\Big).
\end{equation}
%\begin{equation}
%    \mathcal{I}_{A,t}(X)=\displaystyle\int_{-\infty}^{\infty}\displaystyle\lef%t(\int_{[0,X]^n}e(\beta \xt A\xv)\,d\xv \right)e(-t\beta)\,d\beta.
%\end{equation}
In the literature $\mathfrak{S}(A,t)$ is referred to as the singular series. In \cite{Zhao} (Lemma 3.6) Zhao proved that for invertible, indefinite $A$ with $\operatorname{rank}(A)\geq 5$ major arcs bring the expected main contribution in $\int_0^1 S(\alpha)e(-\alpha t)\, d\al$. Namely that:
\begin{equation}
    \int_{\mathfrak{M}} S(\alpha)e(-\alpha t)\, d\al=\mathfrak{S}(A,t)X^{n-2}+O(X^{n-2}L^{-K})
\end{equation}
if equation (1) has positive real solutions and where $\mathfrak{S}(A,t)\gg 1$ provided that $\xt A\xv =t$ has local solutions in the set of $p$-adic units for all primes $p$.\\
\indent So all that's left to do is to estimate the integral over the minor arcs. Our goal is to show that for certain classes of matrices 
\begin{equation}
    \int_{\mathfrak{m}} S(\alpha)e(-\alpha t)\, d\al\ll X^{n-2}L^{-K}.
\end{equation}
This, combined with the result of Zhao about major arcs, will give us the following:
\begin{equation}
     \int_{[0,1]} S(\alpha)e(-\alpha t)\, d\al= \mathfrak{S}(A,t)X^{n-2}+O(X^{n-2}L^{-K})\gg X^{n-2}
\end{equation}
under all the local conditions mentioned above and hence proving Theorem 1.1.\\
\indent Let's look at the trivial bound for the integral over the minor arcs first.
\begin{equation}
    \abs{\int_{\mathfrak{m}} S(\alpha)e(-\alpha t)\, d\al}\leq \sup_{\al\in \mathfrak{m}} \abs{S(\alpha)}\leq \sum_{\xv \leq X} \La(\xv)\leq L^n X^n.
\end{equation}
Let's note that we're still an arbitrarily large power of $\log$ and power of $X^2$ off of our goal in (13).
So the goal for this paper is to save this much over the trivial bound above.\\
\indent To get arbitrarily large logarithm savings we will use a standard result:
\begin{lemma}\label{lemvaug}
Let $\al\in\mathfrak{m}$, $\beta\in\mathbb{R}$ and $d\in \mathbb{Q}\setminus\{0\}$, then we have
\begin{equation}
    \Big \lvert \sum_{x\leq X}\La(x)e(\alpha dx^2+\beta x) \Big\rvert \ll_{d,K} XL^{-K}.
\end{equation}
\end{lemma}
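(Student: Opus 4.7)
The plan is to prove this by a standard combination of Vaughan's identity and Weyl differencing, treating the linear phase $\beta x$ as an essentially harmless perturbation that disappears after differencing.

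First I would reduce to the case where $d$ is a positive integer. Writing $d = d_1/d_2$ in lowest terms and substituting $x = d_2 y + c$ for $c \in \{0,1,\dots,d_2 - 1\}$, one obtains $O_d(1)$ sums of the same shape with $d$ replaced by $d_1 d_2$ (a nonzero integer), at the cost of an implicit constant depending on $d$; the linear term $\beta x$ is absorbed into a new linear term $\beta' y$ plus a constant phase. From here I assume $d \in \mathbb{Z}_{>0}$ and suppress it in the notation.

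Next I apply Vaughan's identity to decompose the weighted sum. This yields, up to $O(\log X)$ factors, Type I sums of the form
\begin{equation*}
\sum_{m \leq M} a_m \sum_{n \leq X/m} e(\al d m^2 n^2 + \be m n)
\end{equation*}
and Type II sums of the form
\begin{equation*}
\sum_{M < m \leq X/M} \sum_{N < n \leq X/N} a_m b_n \, e(\al d m^2 n^2 + \be m n),
\end{equation*}
with bounded coefficients $a_m, b_n \ll \tau(m) L, \tau(n) L$ and the standard choice $M = N = X^{1/3}$. For each, I apply Cauchy--Schwarz in the outer variable and a Weyl differencing step in the inner variable; the quadratic phase produces a factor of $e(2 \al d m^2 h n)$ after differencing by $h$, while the linear term $\be m n$ cancels identically (this is why $\be$ plays no role). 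The problem thus reduces to bounding, on average over $h$ and $m$, geometric sums of the form $\sum_n e(2 \al d m^2 h n)$, which are controlled by $\min(X, \|2 \al d m^2 h\|^{-1})$.

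Finally I exploit the minor arc hypothesis. By Dirichlet's theorem there exists a reduced fraction $a/q$ with $q \leq X^2 L^{-K}$ and $|\al - a/q| \leq L^K/(qX^2)$; the condition $\al \in \mathfrak{m}$ forces $q > L^K$. Since $d$ is a fixed integer, $2 \al d m^2 h$ inherits a rational approximation whose denominator, for generic $m, h$, remains of comparable size to $q$, and a standard divisor-sum count (as in Chapter~2 of Vaughan's \emph{Hardy--Littlewood Method}) yields
\begin{equation*}
\sum_{h,m} \min\!\bigl(X, \|2\al d m^2 h\|^{-1}\bigr) \ll_{d} X^2 L^{-K'}
\end{equation*}
for any $K'$, translating through Cauchy--Schwarz to the desired $X L^{-K}$ bound. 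The main obstacle is the careful bookkeeping of the dependence on $d$ in the Diophantine approximation step, together with verifying that the approximation to $\al$ genuinely transfers to $2 \al d m^2 h$ with controlled loss of quality; everything else is routine.
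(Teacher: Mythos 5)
The paper does not actually prove this lemma: it quotes it from Hua's \emph{Additive Theory of Prime Numbers} (Theorem 10 there), whose proof is precisely the Vinogradov-type argument you outline. Your overall architecture --- reduce to integral $d$, decompose into Type~I and Type~II sums, difference away the quadratic phase, and feed in the Dirichlet approximation with $q > L^{K}$ forced by $\al\in\mathfrak{m}$ --- is therefore the right one, and your reduction to integer $d$ and your treatment of the Type~I sums are essentially sound.

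There is, however, a genuine gap in the Type~II step. After Cauchy--Schwarz in $m$ and the substitution $n_1 = n_2 + h$, the summand is $b_{n_2+h}\overline{b_{n_2}}\,e(2\al d m^2 h n_2 + \al d m^2 h^2 + \beta m h)$: the part of the phase linear in $n_2$ is indeed $2\al d m^2 h n_2$, but the arbitrary coefficients $b_{n_2+h}\overline{b_{n_2}}$ survive the differencing, so the inner sum over $n_2$ is \emph{not} a geometric series and cannot be bounded by $\min(X,\|2\al d m^2 h\|^{-1})$. Equivalently, the phase $\al d m^2 n^2$ is quadratic in \emph{both} variables, so one round of Cauchy--Schwarz plus differencing cannot linearize it against weighted coefficients; this is exactly what separates the degree-two case from the familiar $\sum\La(x)e(\al x)$. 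The standard repair --- and the real content of Hua's proof --- is a second application of Cauchy--Schwarz, after which the Diophantine quantity to be counted has the shape $\|\al d(m_1^2-m_2^2)(n_1^2-n_2^2)\|$ rather than $\|2\al d m^2 h\|$, and the multiplicity count is over representations of an integer as a product of three or four factors. Two smaller points: the linear term $\beta x$ does not ``cancel identically'' --- it survives as a unimodular factor $e(\beta m h)$ that is merely independent of the differenced variable, which only helps once the inner sum is genuinely unweighted; and since $q$ may be as small as $L^{K}$, every loss must be a power of $L$ rather than $X^{\epsilon}$, so the divisor weights $\tau_3,\tau_4$ arising above must be removed by Cauchy--Schwarz before applying the $\sum_k\min(X,\|\al k\|^{-1})$ lemma --- precisely the place where a Weyl-inequality-style $X^{\epsilon}$ loss would destroy the conclusion.
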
 
The proof of it can be found in \cite{Hua2} as Theorem 10.
We now state some lemmas which are the current tool set for saving power of $X^2$ in the integral over the minor arcs. They can be found with proofs in \cite{Zhao} as Lemmas 5.6, 5.7, 5.8 respectively.
\begin{lemma}\label{lem5}
Let $C\in M_n(\mathbb{Q})$ be a symmetric matrix and $H\in M_{n,k}(\mathbb{Q})$. For $
\al \in \mathbb{R}$ and $\be\in \mathbb{R}^k $, we let:
\begin{equation}
    \mathcal{F}(\al,\be)=\sum_{\xv\in \mathcal{X}} \La(\xv)e(\al \xt C\xv +\xt H \be)
\end{equation}
for $\mathcal{X}\subset \mathbb{Z}^n$ a finite subset. Let
\begin{equation}
    \mathcal{N}(\mathcal{F})=\sum_{\substack{\xv\in\mathcal{X} \yv \in \mathcal{X}\\\xt C\xv=\yv^T C\yv\\ \xt H=\yv^T H}}\La(\xv)\La(\yv)
\end{equation}
Then
\begin{equation}
    \int_{[0,1]^{k+1}} \abs{\mathcal{F}(\al,\be)}^2 \,d\al\, d\be \ll \mathcal{N}(\mathcal{F})
\end{equation}
with the implied constant depending only on the matrices $C$ and $H$.
\end{lemma}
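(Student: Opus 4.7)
The plan is to apply orthogonality in the standard Parseval-style way, with the only complication being that $C$ and $H$ have rational rather than integer entries, so $\xt C\xv$ and $\xt H$ need not be integers and one cannot apply $\int_0^1 e(\al m)\, d\al = \mathbb{1}_{m=0}$ directly. I will clear denominators first: pick a positive integer $D$ such that $DC$ and $DH$ have integer entries. Then for every $\xv,\yv\in\mathbb{Z}^n$ the quantity $D(\xt C\xv-\yv^T C\yv)$ and each component of $D(\xt H-\yv^T H)\in \mathbb{Z}^k$ are integers.

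Since $|\mathcal{F}(\al,\be)|^2\geq 0$ and $D\geq 1$, I first enlarge the domain:
\begin{equation*}
\int_{[0,1]^{k+1}} |\mathcal{F}(\al,\be)|^2\,d\al\,d\be \;\leq\; \int_{[0,D]^{k+1}} |\mathcal{F}(\al,\be)|^2\,d\al\,d\be.
\end{equation*}
Then I would expand the square and interchange sum and integral, getting
\begin{equation*}
\sum_{\xv,\yv\in\mathcal{X}} \La(\xv)\La(\yv)\int_0^D e\bigl(\al(\xt C\xv-\yv^T C\yv)\bigr)d\al\,\prod_{i=1}^k\int_0^D e\bigl(\be_i(\xt H-\yv^T H)_i\bigr)d\be_i.
\end{equation*}
Each inner integral has the form $\int_0^D e(u\, m)\,du$ with $Dm\in\mathbb{Z}$; a direct computation gives $D$ when $m=0$ and $0$ otherwise. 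So only the diagonal terms $\xt C\xv=\yv^T C\yv$ and $\xt H=\yv^T H$ survive, yielding $D^{k+1}\mathcal{N}(\mathcal{F})$. Combining with the previous inequality gives the claim with implied constant $D^{k+1}$, which depends only on $C$ and $H$.

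There is no serious obstacle here: the argument is a textbook orthogonality computation. The only subtle point, and the one the proof has to handle, is the rationality of $C,H$; replacing $[0,1]$ with $[0,D]$ using positivity of the integrand is what rescues the standard Parseval identity in this setting.
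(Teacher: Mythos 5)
Your proof is correct and follows essentially the same route as the paper, which simply says to expand $\abs{\mathcal{F}(\al,\be)}^2$ into a double sum and use orthogonality. Your extra step of clearing denominators and enlarging the domain to $[0,D]^{k+1}$ via positivity is exactly the right way to handle the rational entries of $C$ and $H$, and it explains why the implied constant depends on these matrices.
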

For the proof of Lemma 3.2 we simply expand $\abs{\mathcal{F}(\al,\be)}^2$ into a double sum and use orthogonal property of the exponential.
\begin{lemma}\label{lem6}
With $C$ and $H$ as above one has 
\begin{equation}
       \Bigg \{\substack{\xt C\xv=\yv^T C\yv\\ \xt H=\yv^T H}\Bigg\}\ll \Bigg\{\substack{\xt C\yv=0\\ \xt H=0}\Bigg\}
\end{equation}
\end{lemma}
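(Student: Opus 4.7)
The plan is to apply the standard linear substitution $\textbf{u} = \xv + \yv$, $\textbf{v} = \xv - \yv$, which is a bijection of $\mathbb{Z}^n \times \mathbb{Z}^n$ onto the sublattice with $\textbf{u} \equiv \textbf{v} \pmod 2$ and which converts the difference of quadratic forms on the left-hand side into a single bilinear form. This is the whole idea of the proof; the rest is bookkeeping.

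Exploiting the symmetry of $C$, the cross terms $\xt C\yv$ and $\yv^T C\xv$ coincide, and I would record the two identities
\begin{equation*}
    \xt C\xv - \yv^T C\yv = (\xv+\yv)^T C(\xv-\yv) = \textbf{u}^T C\textbf{v},
\end{equation*}
together with $\xt H - \yv^T H = (\xv-\yv)^T H = \textbf{v}^T H$. Hence the left-hand count equals precisely the number of integer pairs $(\textbf{u}, \textbf{v})$ in the image boxes satisfying $\textbf{u}^T C\textbf{v} = 0$, $\textbf{v}^T H = 0$, together with the parity constraint $\textbf{u} \equiv \textbf{v} \pmod 2$.

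I would then drop the parity constraint, which only inflates the count and is harmless since we are proving an upper bound. Both image boxes have side length $O(X)$: the $\textbf{u}$-box is a dilation of $[CX,C'X]^n$ and the $\textbf{v}$-box is $[(C-C')X,(C'-C)X]^n$, both admissible boxes of the form $[DX,D'X]^n$ since the conventions of Section 2 permit the constants $D, D'$ to be arbitrary reals. Relabeling $(\textbf{u}, \textbf{v})$ as $(\yv, \xv)$ one immediately recognises the count on the right-hand side, up to an implicit constant depending only on $C$, $H$, and the original box parameters.

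I do not anticipate a genuine obstacle: the algebraic identity above is essentially the entire content of the lemma. The one step that might look delicate is the handling of the $\textbf{v}$-box, which is centred at the origin rather than contained in a positive orthant, but the flexibility already built into the $\{\,\cdot\,\}$-notation accommodates this, so no sign-pattern surgery is required.
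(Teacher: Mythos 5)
Your proposal is correct and is exactly the argument the paper intends: the substitution $\mathbf{u}=\xv+\yv$, $\mathbf{v}=\xv-\yv$, the identity $\xt C\xv-\yv^TC\yv=\mathbf{u}^TC\mathbf{v}$ (using symmetry of $C$), and the observation that dropping the parity constraint and enlarging the boxes only increases the count. The paper gives only this one-line sketch, and your write-up fills in the bookkeeping faithfully.
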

\vspace{2mm}
The proof of this Lemma is just a clever change of coordinates $\mathbf{u}=\xv+\yv$ and $\mathbf{v}=\xv-\yv$.
\begin{lemma}\label{lem7}
If $C\in M_{n,k}(\mathbb{Q})$ and $\operatorname{rank}(C)\geq 2$ then 
\begin{equation}
    \Big\{\substack{\xt C\yv=0}\Big\}\ll LX^{k+n-2}
\end{equation}
with the implied constant depending only on $C$.
\end{lemma}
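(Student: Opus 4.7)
The plan is to reduce the matrix $C$ to a diagonal form via the Smith normal form, and then to bound the number of integer solutions to the resulting diagonal equation using standard divisor and gcd estimates.

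First, after clearing denominators I may assume $C \in M_{n,k}(\mathbb{Z})$. Writing $C = U D V$ with $U \in GL_n(\mathbb{Z})$, $V \in GL_k(\mathbb{Z})$, and $D = \operatorname{diag}(d_1, \ldots, d_r, 0, \ldots, 0)$ where $r = \operatorname{rank}(C) \geq 2$ and each $d_i \neq 0$, the substitution $\mathbf{x} \mapsto U^T \mathbf{x}$, $\mathbf{y} \mapsto V \mathbf{y}$ is a lattice-preserving bijection that sends the original box into a region contained in a box of comparable size (with constants depending on $U$ and $V$). The equation becomes $\sum_{i=1}^{r} d_i x_i y_i = 0$, and the coordinates $x_{r+1}, \ldots, x_n$ together with $y_{r+1}, \ldots, y_k$ are free, contributing a factor of $\ll X^{n+k-2r}$. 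It therefore suffices to show
\[
\# \Big\{ (x_1, y_1, \ldots, x_r, y_r) \text{ in the box} : \textstyle \sum_{i=1}^{r} d_i x_i y_i = 0 \Big\} \ll L X^{2r-2}.
\]

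The heart of the proof is the base case $r=2$: counting $(x_1, y_1, x_2, y_2)$ in the box satisfying $d_1 x_1 y_1 + d_2 x_2 y_2 = 0$. The degenerate cases $x_1 = 0$ or $x_2 = 0$ contribute $\ll X^2$. When $x_1, x_2 \neq 0$, the equation is linear in $(y_1, y_2)$ with nonzero coefficients $(d_1 x_1, d_2 x_2)$, and a direct lattice-point count gives the number of integer solutions as
\[
\ll 1 + \frac{X \gcd(d_1 x_1, d_2 x_2)}{\max(|d_1 x_1|, |d_2 x_2|)}.
\]
Summing over $x_1, x_2$ and using the classical estimate $\sum_{a, b \leq X} \gcd(a, b)/\max(a, b) \ll X L$ (obtained by grouping terms by their common divisor and reducing to a harmonic-type sum) yields the desired bound $\ll L X^2$.

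For $r \geq 3$ I will fix $(x_2, y_2, \ldots, x_{r-1}, y_{r-1})$, contributing $X^{2(r-2)}$ choices, and reduce to the equation $d_1 x_1 y_1 + d_r x_r y_r = c'$ where $c' = -\sum_{i=2}^{r-1} d_i x_i y_i$. The same analysis as the base case, now applied to the shifted linear equation in $(y_1, y_r)$, bounds the number of solutions by $\ll L X^2 + X \tau(|c'|)$ uniformly in $c'$; the extra term $X \tau(|c'|)$ accounts for degenerate solutions with $x_1 y_1 = 0$ or $x_r y_r = 0$. The first contribution sums to $L X^{2r-2}$ directly, and for the second I combine the trivial bound $\# \{ \mathbf{z} : c'(\mathbf{z}) = c \} \ll X^{2(r-2)-1}$ (obtained by fixing all but one of the $x$-variables and solving linearly for the matching $y$) with $\sum_{|c| \leq C X^2} \tau(|c|) \ll X^2 L$ to obtain $\ll L X^{2r-2}$ as well. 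Multiplying by the free-variable factor $X^{n+k-2r}$ completes the proof.

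The main technical obstacle is the gcd estimate in the base case; this is precisely where the hypothesis $\operatorname{rank}(C) \geq 2$ is essential, saving the extra power of $X$ beyond the naive ``one equation saves one power'' bound. The Smith normal form reduction and the inductive handling of $r \geq 3$ are relatively routine bookkeeping once that key ingredient is in place.
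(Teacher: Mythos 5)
The paper does not actually prove this lemma: it is imported verbatim from Zhao's paper (Lemma 5.8 there), so there is no internal argument to compare yours against. Your proof is correct and is essentially the same elementary counting that underlies Zhao's lemma: isolate a nondegenerate rank-$2$ piece of the bilinear form, bound the solutions of the resulting linear equation in two variables by the lattice count $\ll 1 + X\gcd(a,b)/\max(\lvert a\rvert,\lvert b\rvert)$, and control everything else with $\sum_{a,b\le X}\gcd(a,b)/\max(a,b)\ll X\log X$ and $\sum_{m\le M}\tau(m)\ll M\log M$. The only packaging difference is that you diagonalise via the Smith normal form, whereas Zhao works directly with two linearly independent columns of $C$ (so that the pair of linear forms $(\mathbf{c}_1^T\xv,\mathbf{c}_2^T\xv)$ attains each value $\ll X^{n-2}$ times); both reductions are routine, and your unimodular change of variables correctly preserves the integer lattice while distorting the box only by constants depending on $C$. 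The two places where care is genuinely needed --- the degenerate solutions with a vanishing variable after the coordinate change, and the uniformity in $c'$ of the bound $\ll LX^2 + X\tau(\lvert c'\rvert)$ for the inhomogeneous equation --- are both addressed, so the argument is complete.
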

This is the key step to Zhao's estimates. Let's note that we can trivially bound $\big\{\substack{\xt C\yv=0}\big\}$ by $X^{k+n}$. So we're getting $X^2$ saving over the trivial estimate with this lemma. \\
\indent Let's briefly discuss the use of mentioned lemmas. In all of the problems in section 4 we will be dealing with integrals of the form:
\begin{equation}
    \int_{[0,1]^{k+1}} \sum_{\xv\leq X} \La(\xv)e(\al \xt C\xv +\xt H \be) \,d\al\, d\be
\end{equation}
where $C\in M_n(\mathbb{Q})$ satisfies $\operatorname{rank}(C)\geq 2$. We can apply the Cauchy-Schwarz inequality to get it to the form in Lemma 3.2 with.
So we will be able to trivially estimate
\begin{equation}
    \mathcal{N}(\mathcal{F})\ll L^{2n} \Bigg \{\substack{\xt C\xv=\yv^T C\yv\\ \xt H=\yv^T H}\Bigg\}
\end{equation}
and apply Lemma 3.3. From this, after some change of coordinates we will apply Lemma 3.4. Notice what would happen if we tried to estimate the integral in lemma 3.2 in a more crude way.
\begin{equation}
\abs{\int_{[0,1]^{k+1}} \sum_{\xv\leq X} \La(\xv)e(\al \xt C\xv +\xt H \be) \,d\al\, d\be}\leq \sum_{\xv\leq X} \La(\xv)\ll  L^n X^n
\end{equation}
On the other hand after applying Lemma 3.2-3.4 we get a big improvement. The\vspace{1mm}
saving in this case is of order $L^{O(1)}X^{(n-k-2)/2}$ over the trivial estimate. Since in most of the cases we'll be working with $k\leq n$ this will bring us closer to the desired $X^2$ saving that we need in our problem.
\section{Optimising Zhao's results}
%\begin{lemma}
%Let $\tau(n)$ be the arithmetic function counting number of divisors of n. We %then have
%\begin{equation}
%    \sum_{x\leq X} \tau^2(x)\ll XL^3
%\end{equation}
%\end{lemma}
%\begin{proof}  
%\begin{equation}
%\begin{aligned}
%    &\sum_{x\leq X} \tau^2(x)=\sum_{x\leq X} (\sum_{d\vert x }1)^2= %\sum_{\substack{d_1\leq X \\d_2\leq X}}\sum_{\substack{x\leq X\\d_1\vert %x\\d_2\vert x}}1=\sum_{\substack{d_ 1\leq X \\d_2\leq X}}\left %\lfloor\frac{X}{\operatorname{lcm}(d_1,d_2)}\right\rfloor\\
%    &\leq X\sum_{\substack{d_1\leq X \\d_2\leq %X}}\frac{(d_1,d_2)}{d_1d_2}=X\sum_{\substack{d,n_1,n_2\leq %X\\(n_1,n_2)=1}}\frac{1}{dn_1n_2}\leq X\sum_{d,n_1,n_2\leq X}\frac{1}{dn_1n_2}\ll %XL^3
%\end{aligned}
%\end{equation}
%\end{proof}
\subsection{Off-diagonal rank 1}
Firstly, as in Zhao's paper, we need to work out the form of the matrix $A$. As  $\operatorname{rank_{\operatorname{off}}}(A)=1$, we know $A$ is not diagonal. So suppose $a_{1,2}\neq 0$. Then we have:
\begin{lemma}
If $A$ satisfies $\operatorname{rank_{\operatorname{off}}}(A)=1$ then $A$ can be written as 
\begin{equation}
   A= \begin{pmatrix}
a&\bm{\xi}^T\\
\bm{\xi}& D+h \bm{\xi} \bm{\xi}^T
\end{pmatrix}
\end{equation}
where $D=\operatorname{diag}(d_1$,...,$d_{n-1}$) with $d_i\in \mathbb{Q}$, $h\in \mathbb{Q}$, $a\in \mathbb{Z}$ and $\textbf{0}\neq \bm{\xi}\in\mathbb{Z}^{n-1}$.
\end{lemma}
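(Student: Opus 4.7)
The condition $\operatorname{rank}_{\operatorname{off}}(A)=1$ is equivalent to the vanishing of every $2\times 2$ minor
\[a_{i_1 j_1} a_{i_2 j_2} - a_{i_1 j_2} a_{i_2 j_1}\]
whose row set $\{i_1,i_2\}$ and column set $\{j_1,j_2\}$ are disjoint, together with the existence of some nonzero off-diagonal entry. My plan is to use these minor identities to (i) pick a ``hub'' row which after a simultaneous permutation of rows and columns plays the role of row $1$, and (ii) read off the rank-one structure of the lower-right block.

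For the pivot choice, let $S=\{i:\exists\,j\neq i,\ a_{ij}\neq 0\}$ and $N(i)=\{j\neq i:a_{ij}\neq 0\}$ for $i\in S$. The key sub-claim is: if $i,j\in S$ are distinct with $a_{ij}=0$, then $N(i)=N(j)=\{k\}$ is a common singleton. Indeed, the minor relation applied to rows $\{i,m\}$ and columns $\{j,n\}$ together with $a_{ij}=0$ yields $a_{in}a_{mj}=0$ for all admissible $m,n$; choosing $n\in N(i)$ forces $a_{mj}=0$ for all $m\neq j,n$, squeezing $N(j)$ to a single index, and symmetrically for $N(i)$. A short case analysis then produces $i^*\in S$ with $a_{i^*j}\neq 0$ for every $j\in S\setminus\{i^*\}$: either $a_{ij}\neq 0$ for every pair in $S$ (in which case any $i^*\in S$ works), or the common index $k$ delivered by the sub-claim is the hub, since applying the sub-claim at a hypothetical new zero $a_{km}$ would contradict $|N(k)|\geq 2$.

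Permute rows and columns simultaneously so that $i^*=1$, and set $\bm{\xi}=(a_{12},\ldots,a_{1n})^T\in\mathbb{Z}^{n-1}$. By the hub property $\bm{\xi}\neq 0$, and $\xi_{i-1}\neq 0$ exactly when $i\in S\setminus\{1\}$. Define $h\in\mathbb{Q}$ by $h=0$ if every off-diagonal entry of the lower-right block vanishes, and otherwise $h=a_{i_0 j_0}/(a_{1 i_0}a_{1 j_0})$ for any nonzero $a_{i_0 j_0}$ with $i_0,j_0\geq 2$; this is well-defined by the hub property. The minor identities with rows $\{1,i\}$ and columns $\{j,k\}$ propagate this constant to yield $a_{ij}=h\xi_{i-1}\xi_{j-1}$ for all $i\neq j$ in $\{2,\ldots,n\}$; the dichotomy from the previous paragraph rules out the inconsistent mixed case in which a pivot-incompatible zero coexists with a genuine nonzero off-diagonal. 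Finally, take $a=a_{11}\in\mathbb{Z}$ and $d_{i-1}=a_{ii}-h\xi_{i-1}^2\in\mathbb{Q}$, yielding the claimed decomposition.

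The main obstacle is step (i): taking $\bm{\xi}$ straight from the first row of a matrix with $a_{12}\neq 0$ is \emph{not} sufficient, as small examples such as $n=4$ with only $a_{12}=a_{23}=1$ nonzero off the diagonal demonstrate, since there $\bm{\xi}\bm{\xi}^T$ is supported in a single entry whereas $B$ is not diagonal. Establishing the existence of the hub $i^*$ via the singleton-support sub-claim is the real content of the lemma; once the correct pivot is in place, the rank-one expression for the lower-right block follows mechanically from the $2\times 2$ minor relations.
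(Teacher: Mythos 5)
Your argument is correct, and it is in fact more careful than the paper's own proof of this lemma. Both arguments run on the same algebraic engine --- the rank-$1$ condition on two-row off-diagonal submatrices, equivalently the vanishing of all $2\times 2$ minors with disjoint row and column sets --- but they differ in how the pivot row is chosen. The paper simply assumes $a_{1,2}\neq 0$ after a permutation and asserts that each $\bm{\eta}_j$ is a rational multiple of $\bm{\xi}_j$; this is justified for $j\geq 3$ because $\bm{\xi}_j$ then contains the nonzero entry $a_{1,2}$, but for $j=2$ the vector $\bm{\xi}_2=(a_{1,3},\dots,a_{1,n})^T$ may vanish while $\bm{\eta}_2$ does not, and the asserted proportionality fails. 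Your $n=4$ example with $a_{1,2}=a_{2,3}=1$ the only nonzero off-diagonal entries is exactly such a case: it satisfies $\operatorname{rank}_{\operatorname{off}}(A)=1$, yet with row $1$ as pivot the lower-right block is not of the form $D+h\bm{\xi}\bm{\xi}^T$. The lemma survives because one may re-index so that index $2$ is the pivot, and your hub-selection step (the singleton-neighbourhood sub-claim for a pair $i,j\in S$ with $a_{ij}=0$, followed by the dichotomy) is precisely the missing justification that a valid pivot always exists. The rest of your argument is sound: in the ``connected'' case the minors on rows $\{1,i\}$ and columns $\{j,k\}$ show that $a_{ij}/(a_{1i}a_{1j})$ is independent of the pair, and in the ``disconnected'' case the same minors force the entire lower-right block to be diagonal, so $h=0$ works. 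In short, your route is a strict refinement of the paper's: the paper's version is shorter but incomplete at $j=2$, while your combinatorial pivot selection closes that gap (and the conclusion of the lemma should properly be read as holding only after a simultaneous permutation of rows and columns).
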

\begin{proof}
Let's define $c_i=a_{1,i+1}$\, for $i=1,...,n-1$.
We begin by noting that as $c_1\neq0$, we have $\bm{\xi}^T\vcentcolon= (c_1,...,c_{n-1})\neq\textbf{0}$. Now let's define for $3\leq j\leq n$, \, $\bm{\eta}_j=(a_{j,2},...,a_{j,j-1},a_{j,j+1},...,a_{j,n})^T$, \,$\bm{\xi}_j=(a_{1,2},...a_{1,j-1},a_{1,j+1},...,a_{1,n})^T$ and $\bm{\eta}_2=(a_{2,3},a_{2,4},...,a_{2,n})^T$, $\bm{\xi}_2=(a_{1,3},a_{1,4},...,a_{1,n})^T$. We know that $\operatorname{rank_{\operatorname{off}}}(A)=1$ so for each $j$,\, $\bm{\eta}_j$ is a rational multiple of $\bm{\xi}_j$ as both of those are vectors not on the diagonal.
Therefore there is exactly one $a_{j,j}'\in \mathbb{Q}$ and $h_j\in\mathbb{Q}$ such that $(a_{j,2},...,a_{j,j-1},a_{j,j}',a_{j,j+1},...,a_{j,n})=h_j\bm{\xi}^T$.
Let's write 
\begin{equation}
      A= \begin{pmatrix}
a&\bm{\xi}^T\\
\bm{\xi}& A_1
\end{pmatrix} 
\end{equation}
and let $\textbf{h}^T=(h_2,...,h_n)$. By comparing coefficients we note that $\bm{h} \bm{\xi}^T-A_1$ must be a diagonal matrix, call it $D$. Finally $\bm{h} \bm{\xi}^T$ is a symmetric matrix so for each $1\leq k,\,l\leq n-1$ we have $c_kh_l=c_lh_k$. As $c_1\neq 0$ we can write 
\begin{equation}
    h_k=c_k\frac{h_1}{c_1}.
\end{equation}
So $\bm{h}=h\bm{\xi}$ for $h=\tfrac{h_1}{c_1}\in\mathbb{Q}$ and we get the desired form.
\end{proof}
\begin{lemma}
Let A be given by the form above with $\operatorname{rank}(A)\geq 6$ and let\\ $c_i=a_{1,i+1}$. We can find pairwise distinct $b_i$ with
$1\leq b_1,b_2,b_3,b_4,b_5\leq n-1$ such that $c_{b_1}d_{b_2}d_{b_3}d_{b_4}d_{b_5}\neq 0$.
\end{lemma}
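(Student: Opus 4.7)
The plan is to control how many of the $d_i$ can be zero, using the rank hypothesis, and then to show there are enough $d$-indices left over after excluding a nonzero $c$-index.

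First I would establish the rank bound
$$\operatorname{rank}(A) \leq 2 + \#\{i : d_i \neq 0\}.$$
Let $S_c = \{i \in \{1,\dots,n-1\} : c_i \neq 0\}$ and $S_d = \{i \in \{1,\dots,n-1\} : d_i \neq 0\}$. The bound follows by writing
$$A = \begin{pmatrix} a & \bm{\xi}^T \\ \bm{\xi} & h\bm{\xi}\bm{\xi}^T \end{pmatrix} + \begin{pmatrix} 0 & \bm{0}^T \\ \bm{0} & D \end{pmatrix},$$
since the second summand has rank $|S_d|$ and every row of the first summand below the first is a scalar multiple of $(1, h\bm{\xi}^T)$, so that summand has rank at most $2$. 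By subadditivity of rank, $\operatorname{rank}(A) \geq 6$ then forces $|S_d| \geq 4$.

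Next I would split into cases on $|S_d|$. If $|S_d| \geq 5$, the claim is immediate: pick any $b_1 \in S_c$ (which is nonempty since $\bm{\xi} \neq \bm{0}$), and then four distinct elements of $S_d \setminus \{b_1\}$, which has at least four elements.

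The delicate case, and the one I expect to be the main obstacle, is $|S_d| = 4$: here we must produce a $b_1$ lying outside $S_d$, so we need to rule out $S_c \subseteq S_d$. I would argue by contradiction: if $S_c \subseteq S_d$ then for every $j \notin S_d$ we have $c_j = 0$, $d_j = 0$, and $h c_j c_k = 0$ for all $k$, so both the row and column of $A$ with index $j+1$ are entirely zero. All nonzero entries of $A$ are then confined to the $(|S_d|+1) \times (|S_d|+1) = 5 \times 5$ principal submatrix indexed by $\{1\} \cup \{j+1 : j \in S_d\}$, giving $\operatorname{rank}(A) \leq 5$, a contradiction. Hence $S_c \setminus S_d$ is nonempty; choosing $b_1$ from it and letting $b_2, b_3, b_4, b_5$ enumerate $S_d$ completes the construction.
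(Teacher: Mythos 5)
Your proof is correct and follows essentially the same route as the paper: both deduce $\operatorname{rank}(D)\geq 4$ from the rank-$2$ decomposition of $A$, dispose of the case of five or more nonzero $d_i$ immediately, and in the critical case of exactly four nonzero $d_i$ show that some $c_j$ with $d_j=0$ must be nonzero, since otherwise $A$ would be supported on a $5\times 5$ principal submatrix. If anything, you supply two justifications the paper only asserts (the bound $\operatorname{rank}(A)\leq 2+\#\{i:d_i\neq 0\}$ and the reason $\operatorname{rank}(c_5,\dots,c_{n-1})\neq 0$), so no changes are needed.
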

\begin{proof}
By $\operatorname{rank}(A)\geq 6$ we have $\operatorname{rank}(D)\geq 4$. We also assumed w.l.o.g. that $c_1\neq 0$ so let's choose $b_1=1$. Suppose that the above doesn't hold. Namely, for any quadruple
$k_1,k_2,k_3,k_4\geq 2$ we have $d_{k_1}d_{k_2}d_{k_3}d_{k_4}=0$. This gives the upper bound $\operatorname{rank}(D)\leq 4$, so we only have the case of $\operatorname{rank}(D)=4$\, left to consider.\\
\indent We may suppose that $d_1d_2d_3d_4\neq 0$ and $d_j=0$ for $j> 4$. But then by looking at the rank of the whole matrix we conclude that $\operatorname{rank}(c_5, c_6,...,c_{n-1})\neq 0$. That means $c_j\neq 0$ for some $j\geq5$ giving us a quintuple $1,2,3,4,j$.
\end{proof}
With the desired form we can move on to the estimates of $S(\alpha)$ over the minor arcs.
\begin{lemma}
Let $A\in M_n(\mathbb{Z})$ satisfy $\operatorname{rank_{\operatorname{off}}}(A)=1$ with $n\geq6 $, then\newline
$\int_{\mathfrak{m}} \abs{S(\alpha )}\,d\alpha\ll X^{n-2} L^{-K}$    
\end{lemma}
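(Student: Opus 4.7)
\medskip

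\noindent\textbf{Proof plan.} My plan is to estimate $\int_{\mathfrak{m}}\abs{S(\alpha)}\,d\alpha$ by combining the log saving produced by Lemma \ref{lemvaug} on a single variable with the power-of-$X$ saving produced by Lemmas \ref{lem5}--\ref{lem7} on the remaining ones. Using the decomposition provided by Lemma 4.1,
\[
\xt A\xv \;=\; ax_1^{2} + 2x_1y + hy^{2} + \sum_{i=1}^{n-1}d_i x_{i+1}^{2}, \qquad y=\bm{\xi}^{T}\mathbf{x}'',
\]
with $\mathbf{x}''=(x_2,\dots,x_n)^{T}$, I would separate the $x_1$-sum and write
\[
S(\alpha)=\sum_{\mathbf{x}''\leq X}\La(\mathbf{x}'')\,e\!\Bigl(\alpha\bigl(hy^{2}+\sum_{i}d_i x_{i+1}^{2}\bigr)\Bigr)\,T(\alpha,y(\mathbf{x}''))
\]
where $T(\alpha,y)=\sum_{x_1\leq X}\La(x_1)e(\alpha a x_1^{2}+2\alpha y x_1)$. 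After first reducing to the case in which the quadratic coefficient of the inner Vaughan-sum is nonzero (handling the degenerate subcase $a=0$ by pivoting onto one of the nonzero diagonals $d_{b_i}$ furnished by Lemma 4.2), Lemma \ref{lemvaug} yields $\abs{T(\alpha,y)}\ll XL^{-K}$ uniformly in $y$ for every $\alpha\in\mathfrak{m}$.

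\medskip

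Next, I would apply Cauchy--Schwarz on the outer $\mathbf{x}''$-sum to obtain
\[
\abs{S(\alpha)}^{2}\ll (XL)^{n-1}\sum_{\mathbf{x}''\leq X}\La(\mathbf{x}'')\,\abs{T(\alpha,y(\mathbf{x}''))}^{2},
\]
integrate over $\mathfrak{m}$, and expand $\abs{T}^{2}$ as a double sum over $(x_1,x_1')$. Writing $\mathbf{w}=(x_1,\mathbf{x}'')$ and $\mathbf{w}'=(x_1',\mathbf{x}'')$, the phase in the expanded sum simplifies---the $\mathbf{x}''$-only contributions cancel---to $\alpha\bigl(\mathbf{w}^{T}A\mathbf{w}-\mathbf{w}'^{T}A\mathbf{w}'\bigr)$, which fits the framework of Lemmas \ref{lem5}--\ref{lem7}. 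Applying those lemmas with $C=A$ and an appropriate linear form $H$ encoding the coincidence constraint $\mathbf{w}_{2:n}=\mathbf{w}'_{2:n}$ reduces the task to counting integer solutions of a bilinear system whose controlling matrix has rank at least $2$ by the rank-six hypothesis together with Lemma 4.2 (which furnishes both a nonzero off-diagonal coupling $c_{b_1}$ and four nonzero diagonals $d_{b_2},\dots,d_{b_5}$); this produces the crucial $X^{-2}$ saving.

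\medskip

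Combining the uniform Vaughan factor $L^{-K}$ with the $X^{-2}$ saving from the counting step, I expect to obtain $\int_{\mathfrak{m}}\abs{S(\alpha)}^{2}\,d\alpha\ll X^{2n-4}L^{-2K}$, whence Cauchy--Schwarz in $\alpha$ delivers the claim $\int_{\mathfrak{m}}\abs{S}\,d\alpha\ll X^{n-2}L^{-K}$. The main obstacle I anticipate is in organising the Cauchy--Schwarz and expansion so that the two savings compound rather than trade off: a naive execution yields only $X^{n-1/4}L^{-K/2}$, because the pointwise Vaughan bound on $\abs{T}$ gets absorbed into the counting estimate before the rank input has been exploited. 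One must isolate the Vaughan factor carefully before invoking Lemma \ref{lem5}, and then verify that the bilinear matrix arising after the change of variables really satisfies the rank-two condition of Lemma \ref{lem7} in every subcase permitted by the possible vanishings $h=0$, $c_{b_i}=0$, and $d_{b_i}+hc_{b_i}^{2}=0$.
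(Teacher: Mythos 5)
There is a genuine gap here, and you have in fact identified it yourself in your closing paragraph without resolving it: in your scheme the Vaughan saving and the counting saving really do trade off rather than compound, and no amount of care in ``organising the Cauchy--Schwarz'' fixes this, because the obstruction is structural. After your pointwise Cauchy--Schwarz
$\abs{S(\alpha)}^2\ll (XL)^{n-1}\sum_{\mathbf{x}''}\La(\mathbf{x}'')\abs{T(\alpha,y(\mathbf{x}''))}^2$
you face a fork. Keeping the bound $\abs{T}\ll XL^{-K}$ gives only $\int_{\mathfrak{m}}\abs{S}\,d\alpha\ll X^nL^{-K'}$, with no power saving. Expanding $\abs{T}^2$ and integrating instead leads, via the substitution $\mathbf{u}=\mathbf{w}+\mathbf{w}'$, $\mathbf{v}=\mathbf{w}-\mathbf{w}'$ of Lemma 3.3, to counting solutions of $v_1\,(au_1+\bm{\xi}^T\mathbf{u}'')=0$ (only $v_1$ survives because $\mathbf{w},\mathbf{w}'$ agree in their last $n-1$ coordinates), and this count is of order $X^{n}$: there is no rank-two bilinear form left to feed into Lemma 3.4, because the modulus signs in your Cauchy--Schwarz destroyed all oscillation in $\mathbf{x}''$. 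Either branch leaves you at least $X^{3}$ short of the claimed $\int_{\mathfrak{m}}\abs{S}^2\,d\alpha\ll X^{2n-4}L^{-2K}$. The missing idea is an orthogonality trick: for each $\yv=\mathbf{x}''$ introduce $z=\bm{\xi}^T\yv$ as a new summation variable together with the companion integral $\int_0^1 e((\bm{\xi}^T\yv-z)\beta)\,d\beta$. This decouples the variables completely: $S(\alpha)$ becomes a $\beta$-integral of a product $\mathcal{F}(\alpha,\beta)\prod_j f_j(\alpha,\beta)$ in which $x$ and $z$ sit in one factor and each $y_j$ sits in its own factor $f_j(\alpha,\beta)=\sum_{y_j}\La(y_j)e(\alpha d_jy_j^2+c_jy_j\beta)$. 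Cauchy--Schwarz over $(\alpha,\beta)$ then lets you spend the factors separately: one factor with $d_{b_5}\neq0$ is taken in sup norm via Lemma 3.1 for the $L^{-K}$, while the mean values of $\abs{\mathcal{F}f_{b_4}}^2$ and $\abs{f_{b_1}f_{b_2}f_{b_3}}^2$ each yield, after Lemmas 3.2--3.3 and elimination of a variable using $c_{b_1}\neq 0$ (resp.\ $z=c_{b_4}y_{b_4}$), an honest rank-two bilinear equation to which Lemma 3.4 applies, giving $X^{3/2}$ from each and hence the required total $X^{n-2}L^{-K}$.

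A secondary defect: pivoting the Vaughan bound on $x_1$ requires $a=a_{1,1}\neq0$, which $\operatorname{rank}_{\operatorname{off}}(A)=1$ does not supply, and your fallback of pivoting on $x_{j+1}$ runs into the quadratic coefficient $d_j+hc_j^2$, which can vanish even when $d_j\neq0$. The $z$-substitution repairs this too: it strips the rank-one piece $h\bm{\xi}\bm{\xi}^T$ off the diagonal block and deposits it as $hz^2$ inside $\mathcal{F}$, so that each $f_j$ has quadratic coefficient exactly $d_j$ and the quintuple $c_{b_1}d_{b_2}d_{b_3}d_{b_4}d_{b_5}\neq0$ furnished by Lemma 4.2 can be used verbatim.
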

\begin{proof}
 We start by noting that we may use lemmas above to read information on the form of $A$. Then we can write $S(\al)$ as below:
 \begin{equation}
    S(\al)= \sum_{x \leq X} \sum_{\substack{\yv\in\mathbb{N}^{n-1}\\ \yv\leq X}}\La(x)\La(\yv)e(\alpha(ax^2 +2 x\bm{\xi}^T\yv +\yv^TD\yv +h\yv^T\bm{\xi}\bm{\xi}^T\yv)).
 \end{equation}
 For each $\yv\leq X$ we will let $z=\bm{\xi}^T\yv$. We can use orthogonality property of the exponential function to write
\begin{equation}
\begin{aligned}
    S(\alpha)=&\int_0^1 \sum_{\lvert z\rvert \ll X}\sum_{x \leq X} \sum_{\yv \leq X}\La(\yv)\La(x)e(\alpha (ax^2 +2 xz +\yv^TD\yv +hz^2))\\
    &\times e((\bm{\xi}\yv^T-z)\beta)\,d\beta
\end{aligned}
 \end{equation}
 Let's introduce new sums using Zhao's notation.
 \begin{equation}
 \begin{aligned}
        &\mathcal{F}(\alpha,\bm{\beta})=  \sum_{\lvert z\rvert \ll X}  \sum_{x\leq X } \La(x) e(\alpha(ax^2+2 xz+hz^2)- z \beta)\\
   & f_j(\alpha,\bm{\beta})=\sum_{y_j\leq X}\La(y_j)e(\alpha d_jy_j^2+y_jc_j\beta)
 \end{aligned}
\end{equation}
 for $j=1,...,n-1$. By the above lemma we can find a quintuple $b_1,b_2,b_3,b_4,b_5$ and suppose for notational simplicity that it's $1,2,3,4,5$ i.e. $c_1d_2d_3d_4d_5\neq 0$. We bound the integral using the Cauchy-Schwarz inequality.
\begin{equation}\label{prod}
    \int_{\mathfrak{m}} \abs{S(\alpha )}\,d\alpha\leq (\mathcal{I}_1 \mathcal{I}_2)^{\tfrac{1}{2}}\prod_{i=5}^{n-1} \sup_{\substack{\alpha \in \m\\ \beta\in [0,1]}}\Big \lvert\sum_{y_i\leq X}\La(y_i)e(\alpha d_i y_i^2+c_iy_i\beta)\Big \rvert  
\end{equation}
where
\begin{equation}
    \mathcal{I}_1=\int_{[0,1]^2} \Big \lvert \mathcal{F}(\al,\beta)f_4(\al,\beta)\Big \rvert^2 \,d\beta \,d\alpha
\end{equation}
and 
\begin{equation}
    \mathcal{I}_2=\int_{[0,1]^2} \Big \lvert f_1(\al,\beta) f_2(\al,\beta) f_3(\al,\beta)\Big \rvert^2 \,d\beta \,d\alpha.
\end{equation}
As $d_5\neq 0$ we can bound the product in ({\upshape\ref{prod}}) using Lemma 3.1 by
\begin{equation}
    \prod_{i=5}^{n-1} \sup_{\substack{\alpha \in \m\\ \beta\in [0,1]}}\Big \lvert\sum_{y_i\leq X}\La(y_i)e(\alpha d_i y_i^2+c_iy_i\beta)\Big \rvert  \ll X^{n-5}L^{-K}
\end{equation}
where for $i\geq 6$ we used a trivial bound on the sum: $LX$. \\
\indent Now we turn to the two integrals. Note that by Lemma 3.2
\begin{equation}
    \mathcal{I}_1 \ll L^{4} \Bigg\{\substack{ax^2+2 xz+hz^2+d_4y_4^2=ax^{\prime 2}+ 2 x'z'+hz^{\prime 2}+d_4y_4^{\prime 2}\\ c_4y_4-z=c_4y_4'-z'}\Bigg\}.
\end{equation}
We can therefore apply Lemma 3.3 to get 
\begin{equation}
    \mathcal{I}_1 \ll L^{4} \Bigg\{\substack{axx'+ xz'+ x'z+hzz'+d_4y_4y_4'=0\\ c_4y_4-z=0}\Bigg\}\\
\end{equation}
and if we plug in $z=c_4 y_4$ note that we get a bilinear equation in 5 variables. 
\begin{equation}
\mathcal{I}_1  \ll L^{4} \Bigg\{axx'+ xz'+ c_4y_4x'+hc_4y_4z'+d_4y_4y_4'=0\Bigg\}
\end{equation}
The corresponding matrix is of rank at least 2 as coefficients by $xz'$ and $y_4y_4'$ are nonzero. So by Lemma 3.4 we can bound the last sum by $L^5 X^3$.
We do the same procedure for the second integral.
\begin{equation}
    \mathcal{I}_2=\int_{[0,1]^2} \Big \lvert f_1(\al,\beta) f_2(\al,\beta) f_3(\al,\beta)\Big \rvert^2 \,d\beta \,d\alpha \ll L^{6} \Bigg\{\substack{d_1y_1y_1'+d_2y_2y_2'+d_3y_3y_3'=0\\ c_1y_1+c_2y_2+c_3y_3=0}\Bigg\}
\end{equation}
where we already used Lemma 3.2 and 3.3. Now since $c_1\neq 0$ we can divide by it and write $y_1$ in terms of $y_2$ and $y_3$. So we get:
\begin{equation}
    \mathcal{I}_2\ll L^{6} \Bigg\{-d_1y_1'(c_2y_2+c_3y_3)/c_1+d_2y_2y_2'+d_3y_3y_3'=0\Bigg\}
\end{equation}
a bilinear equation in 5 variables with rank of a corresponding matrix at least two as this time $d_2 d_3\neq 0$. So we can apply Lemma 3.4 to get 
\begin{equation}
    \mathcal{I}_2\ll L^{7}X^3.
\end{equation}
 We combine the results to get that gives us 
 \begin{equation}
     \int_{\mathfrak{m}} \abs{S(\alpha )}\,d\alpha\ll L^{-K} X^{n-5} L^{5/2} X^{3/2}L^{5/2}X^{3/2} \ll L^{-K}X^{n-2} 
 \end{equation}
 as we wanted.
\end{proof}
\subsection{Off-diagonal rank 2}
Suppose now without loss of generality that\\ $\operatorname{rank}_{\operatorname{off}}(A)=\operatorname{rank}(B)=2$ where 
$B=
\begin{pmatrix}
a_{1,3}&a_{1,4}\\
a_{2,3}&a_{2,4}
\end{pmatrix}
$.
We consider different cases depending on ranks of following matrices:
\begin{align}
    B_1=
\begin{pmatrix}
a_{1,3}&a_{1,5}&a_{1,6}&.&.&.&a_{1,n}\\
a_{2,3}&a_{2,5}&a_{2,6}&.&.&.&a_{2,n}
\end{pmatrix}\\
 B_2=
\begin{pmatrix}
a_{1,4}&a_{1,5}&a_{1,6}&.&.&.&a_{1,n}\\
a_{2,4}&a_{2,5}&a_{2,6}&.&.&.&a_{2,n}
\end{pmatrix}.
\end{align}
There are three cases to consider (up to reordering of the matrix $A$) and so three lemmas which we will need to prove:
\begin{lemma}\label{lem1}
If $\operatorname{rank}(B_1)=\operatorname{rank}(B_2)=1$ and $\operatorname{rank}(A)\geq 8$ then we have\\
$\int_{\mathfrak{m}} \abs{S(\alpha )}\,d\alpha\ll X^{n-2} L^{-K}$  
\end{lemma}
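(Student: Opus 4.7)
The plan is to mirror the strategy of Lemma 4.3 with two orthogonality variables $z_1, z_2$ in place of the single $z$ used there. I would first pin down the shape of $A$. Since $\operatorname{rank}(B) = 2$, the columns $(a_{1,3},a_{2,3})^T$ and $(a_{1,4},a_{2,4})^T$ form a basis of $\mathbb{Q}^2$, so the conditions $\operatorname{rank}(B_1) = \operatorname{rank}(B_2) = 1$ force $(a_{1,j}, a_{2,j})^T = 0$ for every $j \geq 5$. Considering next the off-diagonal submatrix of $A$ on rows $\{1,2,i\}$ and columns $\{3,4,j\}$ with distinct $i, j \geq 5$ (whose first two rows are $(a_{1,3},a_{1,4},0)$ and $(a_{2,3},a_{2,4},0)$, already spanning a rank-$2$ subspace), the bound $\operatorname{rank}_{\operatorname{off}}(A) \leq 2$ forces the third row into their span, hence $a_{i,j} = 0$. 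Therefore the principal submatrix of $A$ on $\{5,\ldots,n\}$ is diagonal, say $\operatorname{diag}(d_5,\ldots,d_n)$, and a rank count on the resulting block decomposition of $A$ shows that $\operatorname{rank}(A) \geq 8$ forces at least four of these $d_j$ to be nonzero.

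With these structural facts I would write
\[\xt A \xv = Q_1(x_1,x_2) + 2x_1 z_1 + 2x_2 z_2 + Q_2(x_3,x_4) + 2\sum_{j \geq 5}(a_{3,j}x_3 + a_{4,j}x_4)x_j + \sum_{j \geq 5} d_j x_j^2,\]
where $z_1 = a_{1,3}x_3 + a_{1,4}x_4$ and $z_2 = a_{2,3}x_3 + a_{2,4}x_4$, and introduce $z_1, z_2$ as free integer summation variables via Fourier orthogonality. This factors
\[S(\al) = \int_{[0,1]^2}\mathcal{F}(\al, \be_1, \be_2)\,\mathcal{G}(\al, \be_1, \be_2)\, d\be_1\, d\be_2,\]
where $\mathcal{F}$ is the sum in the four variables $(x_1, x_2, z_1, z_2)$, and inside $\mathcal{G}$ the diagonality of $\operatorname{diag}(d_5,\ldots,d_n)$ lets the $x_5,\ldots,x_n$ part split as $\prod_{j=5}^n f_j(\al; x_3, x_4)$ with $f_j(\al; x_3, x_4) = \sum_{y_j \leq X}\La(y_j)e(\al d_j y_j^2 + 2\al(a_{3,j}x_3 + a_{4,j}x_4)y_j)$. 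For every $j \geq 5$ with $d_j \neq 0$, Lemma \ref{lemvaug} yields $|f_j(\al; x_3, x_4)| \ll XL^{-K}$ uniformly in the linear parameter; the rank hypothesis guarantees at least four such $j$'s, which together furnish the required logarithmic savings.

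After pulling out the suprema of most of the $f_j$'s, I would attack the remaining integral by Cauchy-Schwarz, splitting it into two $L^2$ norms in the spirit of Lemma 4.3 and feeding each through Lemmas \ref{lem5}--\ref{lem7}. On the $\mathcal{F}$ side, the bilinear equation produced by Lemma \ref{lem6} inherits the coefficient $1$ from the $x_1 z_1'$ and $x_2 z_2'$ cross-terms, so its associated matrix automatically has rank at least $2$; on the $(x_3, x_4)$ side, the $\be_1, \be_2$ integrations collapse $x_3 = x_3'$ and $x_4 = x_4'$ via the rank-$2$ independence of the columns of $B$, and a retained $f_{j_0}$ with $d_{j_0} \neq 0$ together with the cross-coupling $(a_{3,j_0}, a_{4,j_0})$ supplies a second bilinear equation of rank at least $2$.

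I expect the main obstacle to be the accounting in this Cauchy-Schwarz step. The naive pairing of $\mathcal{F}$ against the residual sum in $(x_3, x_4)$ loses a factor of $X$ relative to the rank-$1$ template, and closing the remaining gap will require keeping a carefully chosen $f_{j_0}$ inside each of the two $L^2$ norms, so that the identifications coming from its quadratic term and its linear cross-coupling with $(x_3, x_4)$ compensate for the two extra sum variables $z_1, z_2$. The rank-$8$ hypothesis, which by the above structural analysis guarantees two linearly independent vectors $(a_{3,j}, a_{4,j})^T$ with $d_j \neq 0$, will be used precisely at this point to verify the rank-$\geq 2$ condition for Lemma \ref{lem7} on both sides.
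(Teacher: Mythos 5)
Your structural analysis is correct until its last sentence, and that sentence contains a false claim on which the rest of the argument leans. In the block form $A=\left(\begin{smallmatrix} A_1 & B & 0\\ B^T & A_2 & C\\ 0 & C^T & D\end{smallmatrix}\right)$ forced by your (correct) observations, the coupling block $C$ between rows $3,4$ and columns $5,\dots,n$ need not vanish, so the row count only gives $\operatorname{rank}(A)\leq 4+\operatorname{rank}(C)+\operatorname{rank}(D)\leq 6+\operatorname{rank}(D)$, i.e.\ $\operatorname{rank}(D)\geq 2$ --- not four nonzero $d_j$. One can have $\operatorname{rank}(A)=8$ with exactly two nonzero $d_j$ by letting a rank-$2$ block $C$ sit over the kernel of $D$. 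Accordingly the proof must split into the cases $\operatorname{rank}(D)\geq 3$ and $\operatorname{rank}(D)=2$, and in the second case the saving you hoped to extract from additional $f_j$'s has to come from somewhere else entirely (in the paper: from bilinear sums in the pairs $(w_j,y_j)$ together with the invertibility of a $2\times 2$ matrix built from a column of $C$ and a standard basis vector).

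The block $C$ also breaks your factorization and your accounting. With $z_1,z_2$ equal to the linear forms $B^T(x_3,x_4)^T$, the cross terms $2(a_{3,j}x_3+a_{4,j}x_4)x_j$ for $j\geq 5$ stay inside the $\al$-phase, so your $\mathcal{G}$ is one coupled sum over $x_3,\dots,x_n$ rather than a product; pulling out $\sup_{x_3,x_4}\abs{f_j}$ forces you to take absolute values inside the $x_3,x_4$ sum, which destroys the $\bm{\beta}$-oscillation you need in order to run Cauchy--Schwarz against $\mathcal{F}$. Even waiving that, the count does not close: $(\int\abs{\mathcal{F}}^2)^{1/2}\ll L^{O(1)}X^{2}$, the suprema give $X^{n-5}L^{-K}$, and the residual piece in $(x_3,x_4,y_{j_0})$ gives only $X^{3/2}$ after the square root, because once Lemma \ref{lem6} sets $x_3=x_3'$ and $x_4=x_4'$ the surviving bilinear equation is $y_{j_0}(a_{3,j_0}x_3'+a_{4,j_0}x_4'+d_{j_0}y_{j_0}')=0$, of rank $1$, so Lemma \ref{lem7} does not apply and the solution count is $X^3$ rather than $X^2$; the total is $X^{n-3/2}L^{-K}$, short by $X^{1/2}$. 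Your proposed repair --- keeping an $f_{j_0}$ inside the $\mathcal{F}$-norm as well --- is not available, since $f_{j_0}$ shares no summation variables with $\mathcal{F}$ and carries no $\bm{\beta}$-dependence. The missing idea is to choose the orthogonality variable as $\wv^T=2\xt B+\yv^TA_2+2\zv^TC^T$ (with $\yv=(x_3,x_4)^T$ and $\zv=(x_5,\dots,x_n)^T$): this absorbs the $C$-coupling, makes $S(\al)$ factor exactly into $\mathcal{F}$, two bilinear sums $\mathcal{H}_1,\mathcal{H}_2$ over the pairs $(w_j,y_j)$, and genuinely independent $f_j$'s, and it is the $\mathcal{H}_j$'s that supply the extra power of $X$ your scheme is missing.
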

\begin{lemma}\label{lem2}
If $\operatorname{rank}(B_1)=2, \operatorname{rank}(B_2)=1$ and $\operatorname{rank}(A)\geq 8$ then we have\\
$\int_{\mathfrak{m}} \abs{S(\alpha )}\,d\alpha\ll X^{n-2} L^{-K}$  
\end{lemma}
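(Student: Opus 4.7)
The plan is to follow the template of Lemma 4.3 (the off-diagonal rank $1$ case): establish a structural form for $A$, rewrite $\xt A\xv$ in terms of a small number of auxiliary linear functionals of $\yv$, introduce these by Fourier orthogonality to factor the exponential sum, apply Cauchy--Schwarz, and then estimate the resulting pieces with Lemmas 3.2--3.4 and Vaughan's Lemma 3.1. From $\operatorname{rank}(B)=2$ the vectors $\bm{\xi}_1=(a_{1,3},a_{2,3})^T$ and $\bm{\xi}_2=(a_{1,4},a_{2,4})^T$ are linearly independent; $\operatorname{rank}(B_2)=1$ forces $(a_{1,k},a_{2,k})^T=\gamma_k\bm{\xi}_2$ for $k\geq 4$ (with $\gamma_4=1$); and $\operatorname{rank}(B_1)=2$ ensures some $\gamma_k\neq 0$ for $k\geq 5$. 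Applying the off-diagonal rank-$2$ condition to the $3\times 3$ submatrices with rows $\{1,2,j\}$ and columns $\{3,4,k\}$ (for $j,k\geq 5$) forces $a_{j,k}=\delta\gamma_j\gamma_k$ for all $j,k\geq 4$ with $j\neq k$, with a single $\delta\in\mathbb{Q}$. Setting $d_j=a_{j,j}-\delta\gamma_j^2$, $z=\sum_{k\geq 4}\gamma_k y_k$, and $\omega=\sum_{k\geq 4}a_{3,k}y_k$, the quadratic form becomes
\[
\xt A\xv=P(x_1,x_2)+2\xv_P^T\bm{\xi}_1 y_3+2\xv_P^T\bm{\xi}_2 z+a_{3,3}y_3^2+2y_3\omega+\delta z^2+\sum_{j\geq 4}d_j y_j^2,
\]
with $\xv_P=(x_1,x_2)^T$. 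An analogue of Lemma 4.2 (using $\operatorname{rank}(A)\geq 8$) supplies five distinct indices $b_1,\dots,b_5\in\{4,\dots,n\}$ with each $d_{b_i}\neq 0$ and with the vectors $(\gamma_{b_i})_{i\leq 5}$ and $(a_{3,b_i})_{i\leq 5}$ in a sufficiently generic position for the rank-$2$ checks below.

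Fourier orthogonality then introduces $z$ and $\omega$ as summation variables with Fourier duals $\beta_1,\beta_2\in[0,1]$, giving
\[
S(\alpha)=\int_{[0,1]^2}\mathcal{F}(\alpha,\beta_1,\beta_2)\prod_{j=4}^{n}f_j(\alpha,\beta_1,\beta_2)\,d\beta_1\,d\beta_2,
\]
where $\mathcal{F}$ absorbs the sum over $(x_1,x_2,y_3,z,\omega)$ and
\[
f_j(\alpha,\beta_1,\beta_2)=\sum_{y_j\leq X}\Lambda(y_j)\,e\bigl(\alpha d_j y_j^2-\beta_1\gamma_j y_j-\beta_2 a_{3,j}y_j\bigr);
\]
in the degenerate subcase $a_{3,k}=c\gamma_k$ for all $k\geq 4$ one has $\omega=cz$ and only one $\beta$-variable is needed, which only simplifies the argument. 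Cauchy--Schwarz then gives
\[
\int_{\mathfrak{m}}|S(\alpha)|\,d\alpha\ll(\mathcal{I}_1\mathcal{I}_2)^{1/2}\prod_{i\in J_{\sup}}\sup_{\alpha\in\mathfrak{m},\,\beta_1,\beta_2\in[0,1]}|f_i|,
\]
with $\mathcal{I}_1=\int_{[0,1]^3}|\mathcal{F}|^2\,d\alpha\,d\beta_1\,d\beta_2$, $\mathcal{I}_2=\int_{[0,1]^3}|f_{b_1}f_{b_2}f_{b_3}f_{b_4}|^2\,d\alpha\,d\beta_1\,d\beta_2$, and $J_{\sup}=\{4,\dots,n\}\setminus\{b_1,\dots,b_4\}$.

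Lemma 3.2 converts each $\mathcal{I}_i$ to a solution count, and Lemma 3.3 reduces this to a bilinear system $\{\mathbf{u}^T C\mathbf{v}=0,\,\mathbf{v}^T H=0\}$. Substituting the side conditions into the bilinear form: for $\mathcal{I}_1$ the effective matrix is $5\times 3$ and has rank $\geq 2$ because $\bm{\xi}_2\neq 0$ keeps the $x_1$- and $y_3$-columns linearly independent, so Lemma 3.4 gives $\mathcal{I}_1\ll L^{O(1)}X^6$; for $\mathcal{I}_2$ the bilinear form is $\operatorname{diag}(d_{b_1},\dots,d_{b_4})$ restricted to the $2$-dimensional subspace $\{\mathbf{v}:\mathbf{v}^T H=0\}$ (where $H\in M_{4,2}(\mathbb{Q})$ has columns $(\gamma_{b_i})$ and $(a_{3,b_i})$), and the nonvanishing $d_{b_i}$'s together with the linear independence of these columns give rank $\geq 2$ and $\mathcal{I}_2\ll L^{O(1)}X^4$. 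Vaughan's Lemma 3.1 applied to $f_{b_5}$ yields $\sup|f_{b_5}|\ll XL^{-K}$, while the remaining supremum factors are bounded trivially by $\ll LX$; combining $(\mathcal{I}_1\mathcal{I}_2)^{1/2}\ll L^{O(1)}X^5$ with the supremum product $\ll X^{n-7}L^{-K}$ yields $\int_{\mathfrak{m}}|S(\alpha)|\,d\alpha\ll X^{n-2}L^{-K'}$, as required. The main technical obstacle is the rank-$2$ verification for the bilinear form in $\mathcal{I}_2$ in the boundary subcase where $(\gamma_{b_i})$ and $(a_{3,b_i})$ become proportional on every quadruple; there one must use the single-$\beta$ variant, with only three $f$'s inside $\mathcal{I}_2$ and one extra supremum factor, and checking that the same bound $X^{n-2}L^{-K}$ still holds is precisely the regime where the hypothesis $\operatorname{rank}(A)\geq 8$ is tight.
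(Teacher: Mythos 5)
Your overall architecture (structure lemma, auxiliary linear functionals introduced by orthogonality, Cauchy--Schwarz, then Lemmas 3.2--3.4 plus Vaughan) is the same as the paper's, and your structural analysis of $A$ is essentially the paper's Lemma 4.8 (your $\gamma_k$, $\delta$, $a_{3,k}$ are the paper's $\bm{\xi}$, $h$, $\bm{v}$). But there is a genuine gap at the counting step: you require five distinct indices $b_1,\dots,b_5\geq 4$ with every $d_{b_i}\neq 0$ (four for $\mathcal{I}_2$ and a fifth for the Vaughan supremum), and no analogue of Lemma 4.2 supplies them. From the block structure one only has $\operatorname{rank}(A)\leq 3+\operatorname{rank}(\bm{v})+\operatorname{rank}\big(\bm{\xi}\bm{\gamma}_2^T\,\vert\, h\bm{\xi}\bm{\xi}^T\big)+\operatorname{rank}(D)\leq 5+\operatorname{rank}(D)$, so $\operatorname{rank}(A)\geq 8$ forces only $\operatorname{rank}(D)\geq 3$, and equality is attainable (take $n=8$ with exactly three nonzero $d_j$ and choose $\bm{v}$, $\bm{\xi}$ so that the two rows with vanishing diagonal entry remain independent). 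In that case your $\mathcal{I}_2$ cannot even be formed, and there is no $f_{b_5}$ with $d_{b_5}\neq 0$ left for the logarithmic saving; the argument breaks precisely at the boundary case $\operatorname{rank}(A)=8$ that the lemma must cover.

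The paper escapes this by spending the two auxiliary variables differently. It also sets $s=\bm{\xi}^T\zv$, but the second variable is $w=2\xt\bm{\gamma}_1+ay+2\zv^T\bm{v}$, so the entire contribution of the third coordinate $y$ is packaged into a hyperbolic factor $\mathcal{H}(\al,\bm{\beta})=\sum_{w,y}\La(y)e(\al wy+\cdots)$. The bilinear term $wy$ is a nondegenerate rank-$2$ block, so $\int\lvert\mathcal{H}f_l\rvert^2$ (with $\epsilon_l\neq 0$ forcing $z_l=0$) already saves $X^2$ without consuming any diagonal entries; the whole proof then needs only two nonzero $d_j$'s beyond the one used for Vaughan, which $\operatorname{rank}(D)\geq 3$ does provide. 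To salvage your version you would have to route $y_3$ and $\omega$ into a factor of this hyperbolic type rather than absorbing them into $\mathcal{F}$ and then leaning on diagonal entries that need not exist.
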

\begin{lemma}\label{lem3}
If $\operatorname{rank}(B_1)=\operatorname{rank}(B_2)=2$ and $\operatorname{rank}(A)\geq 8 $ then we have\\
$\int_{\mathfrak{m}} \abs{S(\alpha )}\,d\alpha\ll X^{n-2} L^{-K}$ . 
\end{lemma}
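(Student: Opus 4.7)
The plan is to mimic the proof of Lemma 4.3, adapted to the two-dimensional orthogonality forced by off-diagonal rank $2$. The preliminary structural step is an analog of Lemma 4.1: arguing as in the rank $1$ case but using the $2$-dimensional off-diagonal span of rows $1,2$ of $A$, one shows that $A$ may be put in the block form
\[
A = \begin{pmatrix} A_{11} & M \\ M^T & D + M^T H M \end{pmatrix},
\]
with $A_{11} \in M_2(\mathbb{Z})$, $M \in M_{2,n-2}(\mathbb{Z})$ of rank $2$, $D = \operatorname{diag}(d_1,\ldots,d_{n-2})$ over $\mathbb{Q}$, and $H \in M_2(\mathbb{Q})$ symmetric; write $\bm{m}_j$ for the $j$th column of $M$. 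Eliminating the $M$ block by row/column operations yields $\operatorname{rank}(A) \leq \operatorname{rank}(A_{11}) + \operatorname{rank}(D) + 2 \leq \operatorname{rank}(D) + 4$, so the assumption $\operatorname{rank}(A) \geq 8$ forces $\operatorname{rank}(D) \geq 4$ and hence at least four of the $d_j$ are nonzero.

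Next introduce $z_1, z_2$ via orthogonality to enforce $z_i = \bm{\xi}_i \cdot \yv$ (where $\bm{\xi}_i^T$ is the $i$th row of $M$), which gives
\[
S(\alpha) = \int_{[0,1]^2} \mathcal{F}(\alpha, \bm{\beta}) \prod_{j=1}^{n-2} f_j(\alpha, \bm{\beta}) \, d\bm{\beta},
\]
with $\mathcal{F}(\alpha, \bm{\beta})$ the sum over $(x_1, x_2, z_1, z_2)$ encoding $\alpha[\xv_1^T A_{11}\xv_1 + 2\xv_1^T \zv + \zv^T H \zv]$ twisted by $\bm{\beta}\cdot \zv$, and $f_j(\alpha, \bm{\beta}) = \sum_{y_j \leq X} \Lambda(y_j) e(\alpha d_j y_j^2 - y_j (\bm{m}_j \cdot \bm{\beta}))$. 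Applying Cauchy--Schwarz successively in $\bm{\beta}$ and then in $\alpha$ gives
\[
\int_{\mathfrak{m}} |S(\alpha)|\, d\alpha \leq \mathcal{I}_1^{1/2} \mathcal{I}_2^{1/2},
\]
where $\mathcal{I}_1 = \int_{[0,1]^3} |\mathcal{F}|^2\, d\alpha\, d\bm{\beta}$ and $\mathcal{I}_2 = \int_{[0,1]^3} \Bigl|\prod_{j=1}^{n-2} f_j\Bigr|^2 d\alpha\, d\bm{\beta}$.

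Apply Lemmas 3.2 and 3.3 to $\mathcal{I}_1$: the two linear matching conditions force $v_{z_1} = v_{z_2} = 0$, and the resulting bilinear form $\vv^T C' \uv = 0$ with $\vv \in \mathbb{Z}^2$, $\uv \in \mathbb{Z}^4$ has $\operatorname{rank} C' \geq 2$ unconditionally (the identity block arising from the $2 \xv_1^T \zv$ cross-term survives), so Lemma 3.4 yields $\mathcal{I}_1 \ll L^{O(1)} X^4$. For $\mathcal{I}_2$, select $j_0$ with $d_{j_0} \neq 0$; Lemma 3.1 (Vaughan) gives $\sup_{\alpha \in \mathfrak{m}, \bm{\beta}} |f_{j_0}|^2 \ll X^2 L^{-2K}$, hence
\[
\mathcal{I}_2 \leq X^2 L^{-2K} \int_{[0,1]^3} \Bigl|\prod_{j \neq j_0} f_j\Bigr|^2 d\alpha\, d\bm{\beta}.
\]
The remaining integral is handled by Lemmas 3.2--3.4 once more: the linear conditions $\sum_{j \neq j_0} v_j \bm{m}_j = \mathbf{0}$ have rank $2$ (removing a single column from $M$ preserves rank $2$ under $\operatorname{rank}(B_1) = \operatorname{rank}(B_2) = 2$), and with $j_0$ chosen so that the bilinear $\sum_{j \neq j_0} d_j v_j u_j$ restricted to $\ker M|_{\{j \neq j_0\}}$ still has rank $\geq 2$, Lemma 3.4 yields the bound $L^{O(1)} X^{2n-10}$. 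Thus $\mathcal{I}_2 \ll L^{O(1)} X^{2n-8} L^{-2K}$, and combining
\[
\int_{\mathfrak{m}} |S(\alpha)|\, d\alpha \ll L^{O(1)} (X^4)^{1/2} (X^{2n-8} L^{-2K})^{1/2} \ll X^{n-2} L^{-K'}
\]
for arbitrarily large $K'$, as required.

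The hard part is the selection of $j_0$. In the borderline case $\operatorname{rank}(A) = 8$ there are only four nonzero $d_j$, and the rank-$2$ condition on the parameterised bilinear in the $\mathcal{I}_2$ bound becomes delicate: after eliminating the two linear constraints it translates into a condition on how the columns of $M$ at the nonzero-$d$ indices sit inside the $2$-dimensional column span of $M$, i.e. on the rank of the $(n-5)\times(n-3)$ matrix $V D|_{\{j\neq j_0\}}$ obtained from a basis $V$ of $\ker M|_{\{j\neq j_0\}}$. This forces a case analysis on the configurations of the zero- and nonzero-$d$ columns of $M$, analogous to but more involved than Lemma 4.2; the hypotheses $\operatorname{rank}(B_1) = \operatorname{rank}(B_2) = 2$ rule out the worst degenerations (where removing a column would drop the rank of $M$) and guarantee that an admissible $j_0$ always exists.
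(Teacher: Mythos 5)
Your overall framework mirrors the paper's: a structure lemma giving $A$ in block form with a rank-$2$ off-diagonal block $M$ and diagonal $D$ with $\operatorname{rank}(D)\geq 4$; orthogonality to introduce the two-dimensional $\zv = M\yv$; and then Cauchy--Schwarz followed by Lemmas 3.1--3.4. The treatment of $\mathcal{I}_1$ is correct and matches the paper. However, the Cauchy--Schwarz decomposition you choose is genuinely different from the paper's, and it is exactly this choice that creates the unresolved gap at the end of your argument.

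The paper (via Lemma 4.9) first selects a quintuple $b_1,\dots,b_5$ with $\operatorname{rank}(\bm{\xi}_{b_1},\bm{\xi}_{b_2})=2$ and $d_{b_3}d_{b_4}d_{b_5}\neq 0$, pulls the $n-6$ factors $f_k$ ($k\neq b_1,\dots,b_4$) out as a supremum (using Lemma 3.1 on $f_{b_5}$), and applies Cauchy--Schwarz only between $\mathcal{F}$ and $f_{b_1}f_{b_2}f_{b_3}f_{b_4}$. With just four $f$'s in the second integral, the rank-$\geq 2$ requirement for Lemma 3.4 reduces to the transparent statement that $(\bm{\xi}_{b_1},\bm{\xi}_{b_2})$ is invertible and $d_{b_3}d_{b_4}\neq 0$, which the quintuple guarantees by construction. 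You instead keep all $n-3$ of the $f_j$'s (minus one supremum factor $f_{j_0}$) inside $\mathcal{I}_2$. After applying Lemmas 3.2--3.3 to $\prod_{j\neq j_0}f_j$ and eliminating two variables from $\sum_{j\neq j_0} y_j\bm{\xi}_j = 0$, the resulting bilinear matrix has rank equal to $\dim \pi_S(\ker M')$ where $S=\{j\neq j_0: d_j\neq 0\}$, and a short computation gives $\dim \pi_S(\ker M') = (\operatorname{rank}(D)-1) - 2 + \operatorname{rank}(\bm{\xi}_j : j\notin S, j\neq j_0)$. In the borderline case $\operatorname{rank}(D)=4$ this is $1+\operatorname{rank}(\bm{\xi}_j : d_j=0)$, so you must show $\operatorname{rank}(\bm{\xi}_j : d_j=0)\geq 1$. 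This does follow from $\operatorname{rank}(A)\geq 8$ (otherwise too many columns of $A$ vanish), but you explicitly flag this rank verification as ``the hard part'' and leave it to an unproven assertion that the hypotheses rule out the degenerations. That is a genuine gap: it is precisely the case analysis that the paper's Lemma 4.9 was designed to preempt, and the hypotheses you cite for it ($\operatorname{rank}(B_1)=\operatorname{rank}(B_2)=2$) are not the ones that close the argument --- the rank-$8$ condition is.

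There is also a minor order-of-operations slip: you apply Cauchy--Schwarz to get $\mathcal{I}_1^{1/2}\mathcal{I}_2^{1/2}$ with both integrals over $[0,1]^3$ and \emph{then} try to pull $\sup_{\alpha\in\mathfrak{m}}|f_{j_0}|^2$ out of $\mathcal{I}_2$, but that supremum is only meaningful on the minor arcs. One must pull the supremum out \emph{before} Cauchy--Schwarz (while the $\alpha$-integral is still over $\mathfrak{m}$), as the paper does; this is easy to fix and not a conceptual problem, but as written the step is invalid.
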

Strategy for all three lemmas is similar as for proof of off-diagonal rank 1 case. In each case we will firstly set up a structure lemma for the form of matrix $A$. With that we will introduce a new variable which will allow us to bound the integral using the Cauchy-Schwarz inequality. In the end we will use Lemmas 3.1-3.4 to get the desired result. 
\subsubsection{Proof of Lemma 4.4}
\begin{lemma}
If $A$ is a matrix with $\operatorname{rank}(B_1)=\operatorname{rank}(B_2)=1$ then $A$ is of the form \\
\begin{center}
$
    A= \begin{pmatrix}
A_1&B&0\\
B^T&A_2&C\\
0&C^T&D
\end{pmatrix}
$
\end{center} 
where $B\in GL_2(\mathbb{Z})$, $C\in M_{2,n-4}(\mathbb{Z})$ and $D=\operatorname{diag}(d_1,...,d_{n-4})$
\end{lemma}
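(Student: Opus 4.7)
The plan is to extract the three claims -- zero upper-right block, invertibility of $B$, and diagonality of $D$ -- from the rank hypotheses by arguments of increasing strength.

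First, I would record the easy observation that $B$ is nonsingular. By assumption $\operatorname{rank}_{\operatorname{off}}(A)=\operatorname{rank}(B)=2$, so the two columns $\bm{b}_3=(a_{1,3},a_{2,3})^T$ and $\bm{b}_4=(a_{1,4},a_{2,4})^T$ of $B$ are linearly independent in $\mathbb{Q}^2$; in particular both are nonzero, and $B$ is invertible over $\mathbb{Q}$ (which I will take to be what the notation $GL_2(\mathbb{Z})$ means in the statement, since integrality of the entries is inherited from $A$).

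Next, I would prove that the upper-right $2\times(n-4)$ block vanishes. Write $\bm{b}_j=(a_{1,j},a_{2,j})^T$ for $j\geq 5$. Because $\operatorname{rank}(B_1)=1$ and $\bm{b}_3$ is a nonzero column of $B_1$, every column of $B_1$ is a scalar multiple of $\bm{b}_3$; similarly, every column of $B_2$ is a scalar multiple of $\bm{b}_4$. Each $\bm{b}_j$ with $j\geq 5$ appears in both $B_1$ and $B_2$, hence must be simultaneously a multiple of $\bm{b}_3$ and of $\bm{b}_4$. Linear independence of $\bm{b}_3,\bm{b}_4$ forces $\bm{b}_j=\mathbf{0}$, killing the upper-right block; symmetry of $A$ then kills the lower-left block.

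The main step, and the one I expect to be the real obstacle, is showing that the bottom-right $(n-4)\times(n-4)$ block $D$ is diagonal. The naive $2\times 2$ off-diagonal submatrix argument is useless here: a $2\times 2$ matrix automatically has rank $\leq 2$, so the off-diagonal rank hypothesis gives no constraint at that size. The trick is to use a $3\times 3$ off-diagonal submatrix. For any $i\neq j$ with $5\leq i,j\leq n$, consider the submatrix formed by rows $\{1,2,i\}$ and columns $\{3,4,j\}$. None of these indices coincide, so no diagonal entry of $A$ is included; by step two the $(1,j)$ and $(2,j)$ entries vanish, and the submatrix reads
\[
\begin{pmatrix} a_{1,3} & a_{1,4} & 0 \\ a_{2,3} & a_{2,4} & 0 \\ a_{i,3} & a_{i,4} & a_{i,j} \end{pmatrix}.
\]
Its rank is at most $\operatorname{rank}_{\operatorname{off}}(A)=2$, so its determinant vanishes. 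Expanding along the last column gives $a_{i,j}\det(B)=0$, and since $\det(B)\neq 0$ we conclude $a_{i,j}=0$. Hence $D$ is diagonal, and combining with the previous two steps produces the claimed block form.
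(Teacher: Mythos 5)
Your proposal is correct and follows essentially the same route as the paper: the columns $(a_{1,j},a_{2,j})^T$ for $j\geq 5$ vanish because each would have to be a common multiple of the two independent columns of $B$, and the off-diagonal entries of the lower-right block are killed by the same $3\times 3$ off-diagonal submatrix on rows $\{1,2,i\}$ and columns $\{3,4,j\}$, whose rank is forced to be $2$ since it contains $B$. Your parenthetical reading of $GL_2(\mathbb{Z})$ as ``integer entries, invertible over $\mathbb{Q}$'' is the intended one.
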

\begin{proof}
Let's introduce vectors $\gamma_j=(a_{1,2+j},a_{2,2+j})^T$ for $j=1,2,...,n-2$. By looking at ranks of $B_1$ and $B_2$ we note that for any $j\geq 3$\, $\operatorname{rank}(\gamma_1,\gamma_j)=$ $\operatorname{rank}(\gamma_2,\gamma_j)=1$, but as $\gamma_1$ and $\gamma_2$ are linearly independent we conclude that $\gamma_j=0$ for $j\geq3$.
Finally let's consider matrices \\
\begin{center}
   $ 
M_{i,j}=\begin{pmatrix}
a_{1,3}&a_{1,4}&a_{1,j}\\
a_{2,3}&a_{2,4}&a_{2,j}\\
a_{i,3}&a_{i,4}&a_{i,j}
\end{pmatrix}$  
\end{center}
for $i\neq j$ and $i,j>4$.\\
\indent Each such $M_{i,j}$ is an off-diagonal matrix so it must have rank 2 (as $B$ is its submatrix). Therefore as $a_{1,j}=a_{2,j}=0$ we must have $a_{i,j}=0$, and we get the desired form.
\end{proof}
\begin{proof}[ \,Proof of Lemma~{\upshape\ref{lem1}}]
We now turn to the estimate of $S(\alpha)$. By the structure lemma above we get that:       
\begin{equation}
    S(\alpha)=
    \sum_{\substack{ \textbf{x},\textbf{y}\leq X \\ \textbf{x},\textbf{y}\in \mathbb{N}^2}}
    \sum_{\substack{ \textbf{z}\leq X \\ \textbf{z}\in \mathbb{N}^{n-4}}}\La(\xv) \La(\yv)\La(\zv)
     e(\alpha(\xt A_1\xv +2\xt B\yv+\yv^TA_2 \yv +2\yv^T C \zv+\textbf{z}^T D \textbf{z}))
\end{equation}
As in the proof of Lemma 4.3 for each $\xv,\yv\in\mathbb{N}^2$ and $\zv\in\mathbb{N}^{n-4}$ satisfying $\xv,\yv,\zv\leq X$ we let $\textbf{w}^T=2\xt B+\textbf{y}^T A_2+2\textbf{z}^TC^T$. By orthogonality property we have then
\begin{equation}
    \begin{aligned}
S(\alpha)={} &\int_{[0,1]^2}
    \sum_{\substack{\lvert\textbf{w}\rvert\ll X \\ \textbf{w}\in \mathbb{Z}^2}}
    \sum_{\substack{ \textbf{x}\leq X \\ \textbf{x}\in \mathbb{N}^2}}
    \sum_{\substack{ \textbf{y}\leq X \\ \textbf{y}\in \mathbb{N}^2}}
    \sum_{\substack{ \textbf{z}\leq X \\ \textbf{z}\in \mathbb{N}^{n-4}}}\La(\xv) \La(\yv)\La(\zv)
     e(\alpha(\xt A_1\xv +\wv^T \yv+\textbf{z}^T D \textbf{z}))\\
    &\times e((2\xt B+\textbf{y}^T A_2+2\textbf{z}^TC^T-\textbf{w}^T)\bm{\beta})\, d\bm{\beta}.
\end{aligned}
\end{equation}
where $\bm{\beta}$ is a two dimensional vector with  $d\bm{\beta}= d\beta_1 \, d\beta_2$
We introduce exponential sums.
\begin{equation}
\begin{aligned}
        &\mathcal{F}(\alpha,\bm{\beta})=    \sum_{\substack{ \textbf{x}\leq X \\ \textbf{x}\in \mathbb{N}^2}} \La(\xv)   e(\alpha\xt A_1\xv+2\xt B\bm{\beta}))\\
        & \mathcal{H}_j(\alpha,\bm{\beta})=
        \sum_{\substack{\lvert w_j\rvert\ll X \\ w_j\in \mathbb{Z}}}   \sum_{\substack{ y_j\leq X \\ y_j\in \mathbb{N}}}
        e(\alpha w_jy_j+y_j\bm{\gamma_j}^T\bm{\beta}-w_j\bm{e}_j^T\bm{\beta})\La(y)
\end{aligned}
\end{equation}
where $j=1,2$, $\bm{\gamma}_j^T=(a_{2+j,3},a_{2+j,4})$, $\bm{e}_j$ standard basis vectors of size 2 and $y_j,w_j$ are entries of vectors $\textbf{y}$ and $\textbf{w}$ respectively. Finally we let
\begin{equation}
    f_j(\alpha,\bm{\beta})=\sum_{z_j\leq X }\La(z_j)e(\alpha d_jz_j^2+2z_j\bm{\xi}_j^T\bm{\beta})
\end{equation}
with $\bm{\xi}_j=(a_{3,4+j},a_{4,4+j})^T$ and $z_j$ entries of vector $\zv$ for $j=1,2,...,n-4$.\\
\indent We will consider 2 different cases depending on $\operatorname{rank}(D)$. Let's note that as $\operatorname{rank}(A)\geq 8$ we have that $\operatorname{rank}(D)\geq 2$.
Let's suppose first that $\operatorname{rank}(D)\geq 3$ with $d_1d_2d_3\neq 0$. By applying CS inequality we get:
\begin{equation}
\begin{aligned}
    \int_{\mathfrak{m}} \abs{S(\alpha )}\,d\alpha&\leq 
    \Bigg(\sup_{\substack{\alpha\in \m \\ \bm{\beta}\in [0,1]^2}}\Big \lvert\prod_{i=3}^{n-4} f_i(\alpha,\bm{\beta})\Big \rvert\Bigg)
    \displaystyle\left(\int_{[0,1]^3}\abs{\mathcal{F}(\alpha,\be)f_1(\alpha,\be)f_2(\alpha,\be)}^2 \,d\al \,d\be \right)^{\tfrac{1}{2}}\\
    &\times\displaystyle\left(\int_{[0,1]^3}\abs{\mathcal{H}_1(\alpha,\be)\mathcal{H}_2(\alpha,\be)}^2\,d\al \,d\be \right)^{\tfrac{1}{2}}
\end{aligned}
\end{equation}
As $d_3\neq 0$ in the supremum we can save an arbitrarily large power of log over the trivial bound. Namely:
\begin{equation}
    \sup_{\substack{\alpha\in \m \\ \bm{\beta}\in [0,1]^2}}\abs{\prod_{i=3}^{n-4} f_i(\alpha,\bm{\beta})}\ll X^{n-6}L^{-K}
\end{equation}
by Lemma 3.1.
After applying Lemmas 3.2 and 3.3 the first integral becomes
\begin{equation}
    \int_{[0,1]^3}\abs{\mathcal{F}(\alpha,\be)f_1(\alpha,\be)f_2(\alpha,\be)}^2 \,d\al \,d\be \ll L^{8} \Bigg\{\substack{\xt A_1\xv'+ d_1z_1z_1'+d_2z_2z_2'=0\\ \xt B+z_1\bm{\xi}_1^T+z_2\bm{\xi}_2^T=0}\Bigg\}.
\end{equation}
Let's observe that as $B$ is invertible we can write $\xv$ in terms of $z_1$ and $z_2$. So we can get a bilinear equation in 6 variables and as we chose $d_1,d_2\neq 0$, the matrix corresponding to it is of rank at least 2. So first integral can be bounded, using Lemma 3.4, by 
\begin{equation}
    \ll L^{8} \Bigg\{-(z_1\bm{\xi}_1^T+z_2\bm{\xi}_2^T)B^{-1} A_1\xv'+ d_1z_1z_1'+d_2z_2z_2'=0\Bigg\}
    \ll L^9 X^4.
\end{equation}
\indent So after taking square root we are left with $L^{9/2}X^2$. Note that we've saved a power of $X^2$ over the trivial bound. One power of $X$ came from $B$ being invertible and the other from applying Lemma 3.4. So we hope that the second integral can be bounded from above by $L^{O(1)}X^4$ and indeed it can. Second integral is bounded by:
\begin{equation}
    \ll L^{4} \Bigg\{\substack{w_1y_1'+w_1'y_1+w_2y_2'+w_2'y_2=0\\ y_1\bm{\gamma_1^T}+y_2\bm{\gamma_2^T} -w_1 \bm{e}_1^T-w_2\bm{e}_2^T=0}\Bigg\}.
\end{equation}
Because matrix $( \bm{e}_1,\bm{e}_2)$ is invertible we can write $w_1$ and $w_2$ as a linear combination of $y_1$ and $y_2$. So we get a bound:
\begin{equation}
    \ll
    L^4 \Bigg\{\yv^T (\bm{\gamma}_1,\bm{\gamma}_2)^T\yv'+\yv^T\wv'=0\Bigg\} \ll L^5 X^4
\end{equation}
where for the last inequality we applied Lemma 3.4 again.
Therefore after combining those results we get $\int_{\mathfrak{m}} \abs{S(\alpha )}\,d\alpha\ll X^{n-2} L^{-K}$ \\
\indent Now suppose $\operatorname{rank}(D)=2$ and after permuting suppose $d_1d_2\neq 0$ and $d_k=0$ for $k=3,...,n-4$. Because $\operatorname{rank}(A)\geq 8$ and $\operatorname{rank}(D)=2$  we may find $i,j$ with $1\leq i\leq 2$ and $j\geq 3$ such that $\operatorname{rank}(\bm{e}_i,\bm{\xi}_j)=2$. Without loss of generality let's suppose $i=1,j=3$. In this case we split the starting integral as follows:
\begin{equation}
    \begin{aligned}
    \int_{\mathfrak{m}} \abs{S(\alpha )}\,d\alpha&\leq \Bigg(\sup_{\substack{\alpha\in \m \\ \bm{\beta}\in [0,1]^2}}\Big\lvert\prod_{i\neq 2,3} f_i(\alpha,\bm{\beta})\Big\rvert\Bigg)\displaystyle\left(\int_{[0,1]^3}\abs{\mathcal{F}(\alpha,\be)\mathcal{H}_2(\alpha,\be)}^2 \,d\al \,d\be \right)^{\tfrac{1}{2}}\\
    &\times\displaystyle\left(\int_{[0,1]^3}\abs{\mathcal{H}_1(\alpha,\be)f_2(\alpha,\be)f_3(\alpha,\bm{\beta})}^2\,d\al \,d\be \right)^{\tfrac{1}{2}}
\end{aligned}
\end{equation}
Because $d_1\neq 0$ we again can save an arbitrarily large power of log over the trivial bound. Namely:
\begin{equation}
    \sup_{\substack{\alpha\in \m \\ \bm{\beta}\in [0,1]^2}}\abs{\prod_{i\neq 2,3} f_i(\alpha,\bm{\beta})}\ll X^{n-6}L^{-K}
\end{equation}
by Lemma 3.1.
First integral can be bounded by:
\begin{equation}
    \ll L^{6} \Bigg\{\substack{2\xt A_1\xv'+ w_2y_2'+w_2'y_2=0\\ 
    2\xt B+y_2\bm{\gamma}_2^T-w_2\bm{e}_2^T=0}\Bigg\}
    \ll L^{6} \Bigg\{-(y_2\bm{\gamma}_2^T-w_2\bm{e}_2^T)B^{-1}A_1\xv'+ w_2y_2'+w_2'y_2=0\Bigg\} 
\end{equation}
where we again used the fact that $B$ was invertible to write $\xv$ in terms of $y_2$ and $w_2$. So we have a bilinear equation in 6 variables with corresponding matrix of rank at least 2. So we can apply Lemma 3.4 to say that the first integral is bounded by:
\begin{equation}
    \left(\int_{[0,1]^3}\abs{\mathcal{F}(\alpha,\be)\mathcal{H}_2(\alpha,\be)}^2 \,d\al \,d\be \right)^{\tfrac{1}{2}}\ll (L^7 X^4)^{1/2}=L^{7/2}X^2.
\end{equation}
We again apply Lemmas 3.2 and 3.3 to the second integral in (53) to get that it's bounded by
\begin{equation}
    \ll L^{6} \Bigg\{\substack{w_1y_1'+w_1'y_1+2d_2z_2z_2'+2d_3z_3z_3'=0\\ y_1\bm{\gamma_1^T}-w_1 \bm{e}_1^T+2z_2\bm{\xi}_2^T+2z_3\bm{\xi}_3^T=0}\Bigg\}.
\end{equation}
Because $d_3=0$ we note that variable $z_3'$ doesn't have any restrictions on it, so can take $O(X)$ possible values. Therefore
\begin{equation}
    \int_{[0,1]^3}\abs{\mathcal{H}_1(\alpha,\be)f_2(\alpha,\be)f_3(\alpha,\bm{\beta})}^2\,d\al \,d\be \ll L^6X \Bigg\{\substack{w_1y_1'+w_1'y_1+2d_2z_2z_2'=0\\ y_1\bm{\gamma_1^T}-w_1 \bm{e}_1^T+2z_2\bm{\xi}_2^T+2z_3\bm{\xi}_3^T=0}\Bigg\}.
\end{equation}
By what we said above $(\bm{e}_1,\bm{\xi}_3)$ is an ivertible matrix. So we can write $w_1$ and $z_3$ in terms of $y_1$ and $z_2$. This leaves us with a bilinear equation in 5 variables of rank at least 2 as $d_2\neq 0$ and coefficient by $w_1'y_1$ is non zero. So using Lemma 3.4 again we get:
\begin{equation}
    \displaystyle\left(\int_{[0,1]^3}\abs{\mathcal{H}_1(\alpha,\be)f_2(\alpha,\be)f_3(\alpha,\bm{\beta})}^2\,d\al \,d\be \right)^{\tfrac{1}{2}}\ll (XL^7X^3)^{1/2}=L^{7/2}X^2.
\end{equation}
After combining the result we get the exact expression we need.\\
\end{proof}
\subsubsection{Proof of Lemma 4.5}
\begin{lemma}
If $A$ is a matrix with $\operatorname{rank}(B_1)=2, \operatorname{rank}(B_2)=1$ then $A$ is of the form \\
\begin{center}
$
    A= \begin{pmatrix}
A_1&\bm{\gamma}_1&\bm{\gamma}_2\bm{\xi}^T\\
\bm{\gamma}_1^T&a&\bm{v}^T\\
\bm{\xi}\bm{\gamma}_2^T&\bm{v}&D+h\bm{\xi}\bm{\xi}^T
\end{pmatrix}
$
\end{center} 
where $\bm{\gamma}_1\in \mathbb{Z}^2$, $\bm{\gamma}_2\in \mathbb{Q}^2$, $\bm{\xi},\bm{v}\in \mathbb{Z}^{n-3}$, $h\in \mathbb{Q}$ and $D=\operatorname{diag}(d_1,...,d_{n-3})$. Finally one also has $(\bm{\gamma}_1,\bm{\gamma}_2)\in GL_2(\mathbb{Q})$.
\end{lemma}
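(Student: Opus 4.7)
My plan is to pin down the matrix in two stages, paralleling the proof of the structure lemma for the off-diagonal rank $1$ case. First I would normalise the top two rows. Writing $\bm{c}_j=(a_{1,j},a_{2,j})^T\in\mathbb{Z}^2$ for $j\in\{3,\ldots,n\}$, the hypothesis $\operatorname{rank}(B_2)=1$ forces $\bm{c}_4,\ldots,\bm{c}_n$ to lie on a common line through the origin. Choose a direction vector $\bm{\gamma}_2\in\mathbb{Q}^2$ for that line (for instance a primitive integer vector in the span) and write $\bm{c}_{j+3}=\xi_j\bm{\gamma}_2$ with $\xi_j\in\mathbb{Z}$, giving $\bm{\xi}=(\xi_1,\ldots,\xi_{n-3})^T\in\mathbb{Z}^{n-3}$. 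Setting $\bm{\gamma}_1:=\bm{c}_3$, the running assumption $\operatorname{rank}(B)=2$ (with $B=(\bm{c}_3,\bm{c}_4)=(\bm{\gamma}_1,\xi_1\bm{\gamma}_2)$) yields both $\xi_1\neq 0$ and $\bm{\gamma}_1,\bm{\gamma}_2$ linearly independent, so in particular $(\bm{\gamma}_1,\bm{\gamma}_2)\in GL_2(\mathbb{Q})$. This identifies the top-right $2\times(n-2)$ block of $A$ as $(\bm{\gamma}_1,\bm{\gamma}_2\bm{\xi}^T)$ and, by symmetry, the first two columns as well.

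The main work is to present the bottom-right $(n-3)\times(n-3)$ block $A_2'$ of $A$ (indexed by $4\le i,j\le n$) in the form $D+h\bm{\xi}\bm{\xi}^T$. For pairwise distinct $i,j,k\in\{4,\ldots,n\}$, consider the $3\times 3$ off-diagonal submatrix of $A$ with rows $\{3,i,j\}$ and columns $\{1,2,k\}$:
\[
  M_{i,j,k}=\begin{pmatrix}
    \bm{\gamma}_1^T & a_{3,k}\\
    \xi_{i-3}\bm{\gamma}_2^T & a_{i,k}\\
    \xi_{j-3}\bm{\gamma}_2^T & a_{j,k}
  \end{pmatrix}.
\]
By $\operatorname{rank}_{\operatorname{off}}(A)=2$ we have $\det M_{i,j,k}=0$. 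Cofactor expansion along the last column kills the $a_{3,k}$-term (rows $2$ and $3$ of the first two columns are both multiples of $\bm{\gamma}_2^T$), while the remaining two terms share the nonzero factor $\det(\bm{\gamma}_1,\bm{\gamma}_2)$; dividing through yields the bilinear identity $\xi_{i-3}a_{j,k}=\xi_{j-3}a_{i,k}$. Specialising $i=4$ (permitted by $\xi_1\neq 0$) gives $a_{j,k}=h_k\xi_{j-3}$ with $h_k:=a_{4,k}/\xi_1$, and the symmetry $a_{j,k}=a_{k,j}$ forces $h_k=h\,\xi_{k-3}$ for a single scalar $h\in\mathbb{Q}$. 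Setting $d_{j-3}:=a_{j,j}-h\xi_{j-3}^2$ absorbs the diagonal contribution into $D=\operatorname{diag}(d_1,\ldots,d_{n-3})$, so $A_2'=D+h\bm{\xi}\bm{\xi}^T$. The remaining data $A_1$, $a=a_{3,3}$, and $\bm{v}=(a_{3,4},\ldots,a_{3,n})^T$ are unconstrained integers, matching the claimed form.

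The only real obstacle I foresee is book-keeping around vanishing coordinates of $\bm{\xi}$, where the proportionality $h_k/\xi_{k-3}=h_j/\xi_{j-3}$ cannot be divided through. This is harmless, however: if $\xi_{k-3}=0$ the symmetry relation $h_k\xi_{j-3}=h_j\xi_{k-3}$ specialised to $j=4$ reads $h_k\xi_1=0$, so $h_k=0$, and then $a_{j,k}=h_k\xi_{j-3}=0=h\xi_{j-3}\xi_{k-3}$ holds trivially. A secondary point is to verify that $\bm{\xi}\in\mathbb{Z}^{n-3}$, which is arranged by choosing $\bm{\gamma}_2$ to be a primitive integer vector in the common direction of $\bm{c}_4,\ldots,\bm{c}_n$; any rational scaling that appears when solving for $h$ is absorbed in $h\in\mathbb{Q}$.
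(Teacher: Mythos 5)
Your first stage is correct and your overall route is essentially the paper's: both arguments use the off-diagonal rank hypothesis to force the rows of the lower-right block to be proportional to $\bm{\xi}$ and then use symmetry of $A$ to extract a single scalar $h$; you realise the rank condition through vanishing $3\times 3$ determinants where the paper phrases it as ``row $j$ lies in the span of rows $1$ and $2$''. The identification of $\bm{\gamma}_1,\bm{\gamma}_2,\bm{\xi}$, the invertibility of $(\bm{\gamma}_1,\bm{\gamma}_2)$, and the identity $\xi_{i-3}a_{j,k}=\xi_{j-3}a_{i,k}$ for pairwise distinct $i,j,k\geq 4$ are all fine.

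There is, however, a genuine gap at the step ``the symmetry $a_{j,k}=a_{k,j}$ forces $h_k=h\,\xi_{k-3}$ for a single scalar $h$'', and it sits exactly where the hypothesis $\operatorname{rank}(B_1)=2$ — which you state but never use — must enter. Your relation $h_k\xi_{j-3}=h_j\xi_{k-3}$ is only established for $j,k\geq 5$ distinct. The instance $j=4$ that your edge-case paragraph relies on would require $a_{k,4}=h_4\xi_{k-3}$ with $h_4=a_{4,4}/\xi_1$, and that instance cannot be derived: the corresponding submatrix contains the diagonal entry $a_{4,4}$, so the off-diagonal rank condition says nothing about it. Consequently, if every $\xi_{k-3}$ with $k\geq 5$ were zero, your proportionality system would be vacuous and the quantities $h_k=a_{4,k}/\xi_1$ would be unconstrained; row $4$ of the lower block would then not be a multiple of $\bm{\xi}$ and the claimed form would fail (this is precisely the configuration of the $\operatorname{rank}(B_1)=\operatorname{rank}(B_2)=1$ case, which has a different structure lemma). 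The repair is to invoke $\operatorname{rank}(B_1)=2$: since $B_1$ omits column $4$, it forces some $l\geq 5$ with $\xi_{l-3}\neq 0$. Anchoring at $l$, set $h:=h_l/\xi_{l-3}$; then $h_k\xi_{l-3}=h_l\xi_{k-3}$ gives $h_k=h\xi_{k-3}$ for all $k\geq 5$ (this already covers $\xi_{k-3}=0$, yielding $h_k=0$, so no separate patch via ``$j=4$'' is needed), and $a_{4,k}=h_k\xi_1=h\xi_1\xi_{k-3}$ follows. This mirrors the paper, which uses $\operatorname{rank}(B_1)=2$ exactly once — to control row $4$, whose rank certificate must avoid column $4$.
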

\begin{proof}
We begin by noting that as $\operatorname{rank}(B_2$)=1 and $(a_{1,4},a_{2,4})^T\neq0$ we can say that $B_2=(a_{1,4},a_{2,4})^T\bm{\xi}'^T$ with $\bm{\xi}'\in \mathbb{Q}^{n-3}$. So after rescaling appropriately we have
$B_2=\bm{\gamma}_2\bm{\xi}^T$ for some $\bm{\xi}\in \mathbb{Z}^{n-3}$ and $\bm{\gamma}_2$ satisfying $\operatorname{rank}(B)=\operatorname{rank}(\bm{\gamma}_1,\bm{\gamma}_2)=2$.
Let's define for $4\leq j\leq n$, $1\leq i\leq 2$ $\bm{\eta}_j=(a_{j,3},...,a_{j,j-1},a_{j,j+1},...,a_{j,n})^T$,  $\bm{\theta}_{i,j}=(a_{i,3},...a_{i,j-1},a_{i,j+1},...,a_{i,n})^T$. We know that $\operatorname{rank}(B)=\operatorname{rank}(B_1)=2$ so for each $j$, $\bm{\eta}_j$ is a linear combination of $\bm{\theta}_{1,j},\bm{\theta}_{2,j}$. Because of that if we let $\bm{\theta}_{i}=(a_{i,4},...,a_{i,n})^T$ for $i=1,2$ then for each $4\leq j\leq n$ there is exactly one $a_{j,j}'\in \mathbb{Q}$ such that $\bm{\varphi}_j=(a_{j,4},...,a_{j,j-1},a_{j,j}',a_{j,j+1},...,a_{j,n})^T$ is a linear combination of $\bm{\theta}_1$ and $\bm{\theta}_2$. Because $\operatorname{rank}(B_2$)=1 we have that $\bm{\theta}_1$ and $\bm{\theta}_2$ are both a rational multiple of $\bm{\xi}$ i.e. $\bm{\varphi}_j=h_j\bm{\xi} $ for some $h_j\in \mathbb{Q}$. If we now let $\mathbf{h}^T= (h_2,...,h_n)$ then
\begin{center}
$
    A= \begin{pmatrix}
A_1&\bm{\gamma}_1&\bm{\gamma}_2\bm{\xi}^T\\
\bm{\gamma}_1^T&a&\bm{v}^T\\
\bm{\xi}\bm{\gamma}_2^T&\bm{v}&D+\bm{h}\bm{\xi}^T
\end{pmatrix}
$
\end{center} 
for $D$ a diagonal matrix. But as $A$ is symmetric we get $\mathbf{h}=h \bm{\xi}$ for some $h\in \mathbb{Q}$.
\end{proof}
\begin{proof}[ \,Proof of Lemma~{\upshape\ref{lem2}}]
Note that $\operatorname{rank}(D)+\operatorname{rank}(\bm{v})+1+3\geq \operatorname{rank}(A)\geq 8$ which gives us $\operatorname{rank}(D)\geq 3$. If we let $\bm{\xi}^T=(\epsilon_1,...,\epsilon_{n-3})$, then as $\operatorname{rank}(B)=\operatorname{rank}(B_1$)=2 we can find $2\leq l\leq n-3$ such that $\epsilon_1\epsilon_l\neq 0$. As $\operatorname{rank}(D)\geq 3$ we know that $d_k\neq 0$ for some $2\leq k\neq l$. So suppose w.l.o.g. that $l=2$ and $k=3$. Let's now proceed with the estimates.
\begin{equation}
\begin{aligned}
   S(\alpha)=&\sum_{\substack{ \textbf{x}\leq X \\ \textbf{x}\in \mathbb{N}^2}}
    \sum_{\substack{ y\leq X \\ y\in \mathbb{N}}}
    \sum_{\substack{ \textbf{z}\leq X \\ \textbf{z}\in \mathbb{N}^{n-3}}}\La(\xv) \La(y)\La(\zv)
     e(\alpha(\xt A_1\xv +2\xt \bm{\gamma_1}y+ay^2+2\xt \bm{\gamma_2}\bm{\xi}^Tz))\\
     &\times e(\alpha(2y\bm{v}^T\zv+\textbf{z}^T D \textbf{z}+\zv^Th\bm{\xi}\bm{\xi}^T\zv))
\end{aligned}
\end{equation}
For each $\xv\in \mathbb{N}^2$, $y\in \mathbb{N}$ and $\zv\in \mathbb{N}^{n-3}$ satisfying $\xv,y, \zv\leq X$ we introduce new variables $s=\bm{\xi}^T\zv$, $w=2\xt\bm{\gamma_1}+ay+2\zv^T\bm{v}$ and use orthogonality property of exponential function to write $S(\al)$ as
\begin{equation}
        \begin{aligned}
S(\alpha)={} &\int_{[0,1]^2}
    \sum_{\substack{\lvert s\rvert\ll X \\ s\in \mathbb{Z}}}
    \sum_{\substack{\lvert w\rvert\ll X \\ w\in \mathbb{Z}}}
    \sum_{\substack{ \textbf{x}\leq X \\ \textbf{x}\in \mathbb{N}^2}}
    \sum_{\substack{ y\leq X \\ y\in \mathbb{N}}}
    \sum_{\substack{ \textbf{z}\leq X \\ \textbf{z}\in \mathbb{N}^{n-3}}}\La(\xv) \La(y)\La(\zv)
     e(\alpha(\xt A_1\xv +wy+\textbf{z}^T D \textbf{z}))\\
    &\times e(\al(hs^2+2\xt \bm{\gamma}_2s)+(2\xt \bm{\gamma}_1+ay+2\zv^T\bm{v}-w)\beta_1+(\bm{\xi}^T\zv-s)\beta_2) d\bm{\beta}
\end{aligned}
\end{equation}
where $d\be=d\beta_1\, d\beta_2$
As before we will split the integral into different parts. First we introduce different exponential sums.
\begin{equation}
\begin{aligned}
        &\mathcal{F}(\alpha,\bm{\beta})=  \sum_{\substack{\lvert s\rvert\ll X \\ s\in \mathbb{Z}}}  \sum_{\substack{ \textbf{x}\leq X \\ \textbf{x}\in \mathbb{N}^2}} \La(\xv)   e(\alpha(\xt A_1\xv+hs^2+2\xt \bm{\gamma}_2s)+2\xt \bm{\gamma}_1\beta_1-s\beta_2)\\
        & \mathcal{H}(\alpha,\bm{\beta})=
        \sum_{\substack{\lvert w\rvert\ll X \\ w\in \mathbb{Z}}}   \sum_{\substack{ y\leq X \\ y\in \mathbb{N}}}
        e(\alpha wy+ay\beta_1-w\beta_1)\La(y)
\end{aligned}
\end{equation}
and finally for $j=1,2,...,n-3$ we let
\begin{equation}
    f_j(\alpha,\bm{\beta})=\sum_{\substack{ z_j\leq X \\ z_j\in \mathbb{N}}}\La(\zv)e(\alpha d_jz_j^2+2z_jv_j\beta_1+\epsilon_jz_j\beta_2)
\end{equation}
where $\bm{v}^T=(v_1,...,v_{n-3})$.
Let's bound the integral as below.
\begin{equation}
    \begin{aligned}
     \int_{\mathfrak{m}} \abs{S(\alpha )}\,d\alpha\leq 
    &\Bigg(\sup_{\substack{\alpha\in \m \\ \bm{\beta}\in [0,1]^2}}\big\lvert\prod_{k\neq 2,3} f_k(\alpha,\bm{\beta})\big\rvert\Bigg)\displaystyle\left(\int_{[0,1]^3}\abs{\mathcal{F}(\alpha,\be)f_3(\alpha,\bm{\beta})}^2 \,d\al \,d\be \right)^{\tfrac{1}{2}}\\
    &\times\displaystyle\left(\int_{[0,1]^3}\abs{\mathcal{H}(\alpha,\be)f_2(\alpha,\bm{\beta})}^2\,d\al \,d\be \right)^{\tfrac{1}{2}}        
    \end{aligned}
\end{equation}
From our discussion above we know that $d_3\neq 0$. So we can bound the first integral by:
\begin{align}
    &\ll L^{6} \Bigg\{\substack{2\xt A_1\xv'+ 2hss'+2d_3z_3z_3'+\xt\bm{\gamma}_2s'+s\bm{\gamma}_2^T\xv'=0\\ 
    \xt\bm{\gamma}_1+z_3v_3=0\\ s=\epsilon_3 z_3}\Bigg\}\\
    &\ll L^{6} \Bigg\{\substack{2\xt A_1\xv'+ 2h\epsilon_3 z_3s'+2d_3z_3z_3'+\xt\bm{\gamma}_2s'+\epsilon_3z_3\bm{\gamma}_2^T\xv'=0\\ 
    \xt\bm{\gamma}_1+z_3v_3=0}\Bigg\}.
\end{align}
Let's introduce a new variable $q=\xt\bm{\gamma}_2$. We can rewrite the above set of equations as
\begin{equation}
    \Bigg\{\substack{2\xt A_1\xv'+ 2h\epsilon_3 z_3s'+2d_3z_3z_3'+qs'+\epsilon_3z_3\bm{\gamma}_2^T\xv'=0\\ 
    \xt(\bm{\gamma}_1,\bm{\gamma}_2)+(z_3v_3,-q)=0}\Bigg\}.
\end{equation}
Note that as $(\bm{\gamma}_1,\bm{\gamma}_2)$ is invertible we can again write $\xv$ as a combination of $z_3$ and $q$. After that we are left with bilinear equation in 6 variables with rank at least 2 
\begin{equation}
    \Bigg\{-2(z_3v_3,-q)(\bm{\gamma}_1,\bm{\gamma}_2)^{-1} A_1\xv'+ 2h\epsilon_3 z_3s'+2d_3z_3z_3'+qs'+\epsilon_3z_3\bm{\gamma}_2^T\xv'=0\Bigg\}.
\end{equation}
So we again apply Lemma 3.4 to get:
\begin{equation}
    \left(\int_{[0,1]^3}\abs{\mathcal{F}(\alpha,\be)f_3(\alpha,\bm{\beta})}^2 \,d\al \,d\be \right)^{\tfrac{1}{2}}\ll (L^7 X^4)^{1/2}=L^{7/2}X^2.
\end{equation}
For the second integral we use Lemmas 3.2 and 3.3 to arrive at:
\begin{equation}
    \int_{[0,1]^3}\abs{\mathcal{H}(\alpha,\be)f_2(\alpha,\bm{\beta})}^2\,d\al \,d\be \ll L^{4} \Bigg\{\substack{2d_2 z_2z_2'+wy'+w'y=0\\ 
    2z_2v_2+ya-w=0\\ \epsilon_2 z_2=0}\Bigg\}.
\end{equation}
As $\epsilon_2\neq 0$ we must have $z_2=0$. Leaving us with:
\begin{equation}
    L^4\Bigg\{\substack{wy'+w'y=0\\ 
    ya=w}\Bigg\}\ll L^4 \Bigg\{y(ay'+w')=0\Bigg\}\ll L^4 X^2.
\end{equation}
Finally as $\operatorname{rank}(D)\geq 3$ we can find $k\neq 2,3$ such that $d_k\neq 0$. Therefore by Lemma 3.1 we get
\begin{equation}
    \sup_{\substack{\alpha\in \m \\ \bm{\beta}\in [0,1]^2}}\abs{\prod_{k\neq 2,3} f_k(\alpha,\bm{\beta})}\ll L^{-K}X^{n-5}
\end{equation}
After combining all three estimates we get the desired result.
\end{proof}
\subsubsection{Proof of Lemma 4.6}
\begin{lemma}\label{lem8}
Let $\operatorname{rank}(B_1)=\operatorname{rank}(B_2)=2$. Then $A$ is of the form:
\begin{center}
$
    A= \begin{pmatrix}
A_1&(\bm{\gamma}_1,\bm{\gamma}_2)C\\
C^T(\bm{\gamma}_1,\bm{\gamma}_2)^T&D+C^THC
\end{pmatrix}
$
\end{center} 
where $(\bm{\gamma}_1,\bm{\gamma}_2)\in GL_2(\mathbb{Q})$, $C\in M_{2,n-2}(\mathbb{Z})$, $H\in M_{2}(\mathbb{Q})$ and $D=\operatorname{diag}(d_1,...,d_{n-2})$
\end{lemma}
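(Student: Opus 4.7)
The plan is to follow the pattern of the structure lemmas for lower off-diagonal ranks (Lemmas 4.1 and 4.7): first factor the $2\times(n-2)$ off-diagonal block of $A$, then propagate this factorization to the bottom-right block via the symmetry of $A$ and the off-diagonal rank constraint. Write
\[
A = \begin{pmatrix} A_1 & M \\ M^T & N \end{pmatrix},
\]
with $M$ the $2\times(n-2)$ top-right block, whose $j$-th column is $\bm{m}_j := (a_{1,j+2},a_{2,j+2})^T$. The assumption $\operatorname{rank}(B)=2$ ensures $\bm{m}_1,\bm{m}_2$ are $\mathbb{Q}$-linearly independent, so the columns of $M$ generate a rank-$2$ sublattice of $\mathbb{Z}^2$; choosing a $\mathbb{Z}$-basis $\bm{\gamma}_1,\bm{\gamma}_2$ of that lattice, every $\bm{m}_j$ is an integer combination of $\bm{\gamma}_1,\bm{\gamma}_2$, giving $M=(\bm{\gamma}_1,\bm{\gamma}_2)C$ with $G:=(\bm{\gamma}_1,\bm{\gamma}_2)\in GL_2(\mathbb{Q})$ and $C\in M_{2,n-2}(\mathbb{Z})$.

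To handle $N$, fix $i\ge 3$ and consider the $3\times(n-3)$ submatrix of $A$ on rows $\{1,2,i\}$ and columns $\{3,\ldots,n\}\setminus\{i\}$. Every entry is off-diagonal, so the rank is at most $2$. A short case check using $\operatorname{rank}(B)=\operatorname{rank}(B_1)=\operatorname{rank}(B_2)=2$ shows that removing any single column from $M$ still leaves a rank-$2$ matrix (removing column $1$ or $2$ is immediate from $\operatorname{rank}(B_2)$ or $\operatorname{rank}(B_1)$; for any later column, $\bm{m}_1,\bm{m}_2$ remain), so the first two rows of the submatrix are independent and the third row is a $\mathbb{Q}$-combination of them. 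Writing the coefficients as $\bm{\lambda}_i\in\mathbb{Q}^2$ and $\bm{c}_r$ for the $r$-th column of $C$, this becomes
\[
a_{i,j} = \bm{\lambda}_i^T G\bm{c}_{j-2} \qquad (i\ge 3,\ j\ge 3,\ j\ne i).
\]

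The symmetry $a_{i,j}=a_{j,i}$ then gives $\bm{\lambda}_i^T G\bm{c}_{j-2} = \bm{\lambda}_j^T G\bm{c}_{i-2}$ for all distinct $i,j\in\{3,\ldots,n\}$. Setting $\bm{u}_i:=G^T\bm{\lambda}_i$, I argue that the assignment $\bm{c}_{i-2}\mapsto\bm{u}_i$ extends to a well-defined linear map $\mathbb{Q}^2\to\mathbb{Q}^2$, represented by some $H^T\in M_2(\mathbb{Q})$. Granted this, $a_{i,j}=\bm{c}_{i-2}^T H\bm{c}_{j-2}=(C^T H C)_{i-2,j-2}$ for $i\ne j$, and absorbing the diagonal discrepancies $d_k:=a_{k+2,k+2}-(C^T H C)_{k,k}$ into $D:=\operatorname{diag}(d_1,\ldots,d_{n-2})$ produces $N=D+C^T HC$. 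Symmetry of $H$ is immediate, since the antisymmetric form $(x,y)\mapsto x^T(H-H^T)y$ evaluates to $a_{i,j}-a_{j,i}=0$ on any pair $(\bm{c}_{i-2},\bm{c}_{j-2})$ of distinct columns of $C$, hence vanishes on $\mathbb{Q}^2\times\mathbb{Q}^2$ by rank considerations.

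The main obstacle is the well-definedness of $\bm{c}_{i-2}\mapsto\bm{u}_i$: a $\mathbb{Q}$-linear relation $\sum_{i\in I}\al_i\bm{c}_{i-2}=0$ must force $\sum_{i\in I}\al_i\bm{u}_i=0$. The trick is to apply the compatibility $\bm{u}_i^T\bm{c}_{k-2}=\bm{u}_k^T\bm{c}_{i-2}$ with both "external" test indices $k\notin I$ (which yields orthogonality of $\sum_{i\in I}\al_i\bm{u}_i$ to $\bm{c}_{k-2}$ outside $I$) and "internal" indices $k\in I$ (giving orthogonality to $\bm{c}_{k-2}$ inside $I$); together these show $\sum_{i\in I}\al_i\bm{u}_i$ is orthogonal to every column of $C$, and since $\operatorname{rank}(C)=\operatorname{rank}(M)=2$, the sum vanishes.
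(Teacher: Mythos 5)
Your proposal is correct and follows essentially the same route as the paper: factor the top-right block as $(\bm{\gamma}_1,\bm{\gamma}_2)C$, use the $3\times(n-3)$ off-diagonal submatrices on rows $\{1,2,i\}$ (with the same case split via $B_2$, $B_1$, $B$) to express each lower row as a combination of the first two, and then invoke symmetry to reach the $D+C^THC$ form. Your explicit well-definedness argument for the map $\bm{c}_{i-2}\mapsto\bm{u}_i$ supplies the detail behind the paper's one-line assertion that the symmetric matrix $MC$ can be rewritten as $C^THC$, which is a welcome addition rather than a deviation.
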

\begin{proof}
The proof goes along the same lines as before. Note that each vector $(a_{1,2+j},a_{2,2+j})^T$ for $j=1,2,...,n-2$ is a linear combination of $(a_{1,3},a_{2,3})^T$ and $(a_{1,4},a_{2,4})^T$. Therefore we have
\begin{center}
$
    A= \begin{pmatrix}
A_1&(\bm{\gamma}_1,\bm{\gamma}_2)C\\
C^T(\bm{\gamma}_1,\bm{\gamma}_2)^T&A_2
\end{pmatrix}
$
\end{center}
after rescaling we get $C\in M_{2,n-2}(\mathbb{Z})$ and $\operatorname{rank}((\bm{\gamma}_1,\bm{\gamma}_2))=2$. For the final part let's define for $3\leq j\leq n$ and $1\leq i\leq 2$, $\bm{\eta}_j=(a_{j,3},...,a_{j,j-1},a_{j,j+1},...,a_{j,n})^T$ and $\bm{\theta}_{i,j}=(a_{i,3},...,a_{i,j-1},a_{i,j+1},...,a_{i,n})^T$. Similarly as before we can say that for each $j$, $\bm{\eta}_j$ is a linear combination of $\bm{\theta}_{1,j},\bm{\theta}_{2,j}$. For $j=3$ it's because $\operatorname{rank}(B_2)=2$ for $j=4$ because $\operatorname{rank}(B_1)=2$ and for $j\geq 5$ because $\operatorname{rank}(B)=2$. Finally for $i=1,2$ we define $\bm{\theta}_i=(a_{i,3},...,a_{i,n})^T$ and for $3\leq j\leq n$ we observe that there is a unique $a_{j,j}'\in \mathbb{Q}$ such that  $\bm{\varphi}_j=(a_{j,3},...,a_{j,j-1},a_{j,j}',a_{j,j+1},...,a_{j,n})^T$ is a linear combination of $\bm{\theta}_1$ and $\bm{\theta}_2$. This means that $A$ has the form
\begin{center}
$
    A= \begin{pmatrix}
A_1&(\bm{\gamma}_1,\bm{\gamma}_2)C\\
C^T(\bm{\gamma}_1,\bm{\gamma}_2)^T&D+MC
\end{pmatrix}
$
\end{center}
and as $MC$ is symmetric it can be written in the form $C^THC$ for
$H\in M_2(\mathbb{Q})$.
\end{proof}
\begin{lemma}
Let $A$ be given by the form above. Let $C=(\bm{\xi}_1,\bm{\xi}_2,...,\bm{\xi}_{n-2})$. Then we can find pairwise distinct $b_i$ with $1\leq b_1,b_2,b_3,b_4,b_5\leq n-2$ such that $\operatorname{rank}((\bm{\xi}_{b_1},\bm{\xi}_{b_2}))=2$ and $d_{b_3}d_{b_4}d_{b_5}\neq 0$
\end{lemma}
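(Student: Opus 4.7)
The plan is to first show that $\operatorname{rank}(D) \geq 4$ using the hypothesis $\operatorname{rank}(A) \geq 8$, and then split into two cases depending on whether $\operatorname{rank}(D) = 4$ or $\operatorname{rank}(D) \geq 5$. Put $T = \{j : d_j \neq 0\}$ and $U = \{1,\dots,n-2\} \setminus T$, and write $G = (\bm{\gamma}_1,\bm{\gamma}_2) \in GL_2(\mathbb{Q})$. For each $j \in U$, a direct computation using the block form of Lemma~4.8 shows that the $(j+2)$-th row of $A$ equals $\bm{\xi}_j^T \cdot (G^T, HC)$, where $(G^T, HC)$ is a $2 \times n$ matrix of full row rank $2$ (since $G^T$ is invertible). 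Therefore the rows of $A$ indexed by $U$ all lie in a fixed $2$-dimensional row space, and together contribute rank at most $2$. Combining this with the trivial bounds of $2$ for the first two rows and $|T|$ for the rows indexed by $T$ yields $\operatorname{rank}(A) \leq |T| + 4$, hence $|T| \geq 4$.

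Next, note that $\operatorname{rank}((\bm{\xi}_1,\bm{\xi}_2)) = 2$: the top-right $2 \times 2$ block of $A$ is $B = G(\bm{\xi}_1,\bm{\xi}_2)$, and $\operatorname{rank}(B) = 2$ by the standing assumption of Section~4.2. If $|T| \geq 5$, I would therefore take $b_1 = 1$, $b_2 = 2$, and pick $b_3,b_4,b_5$ from $T \setminus \{1,2\}$, which contains at least $3$ elements. This disposes of the easy case.

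The main obstacle is the tight case $|T| = 4$, because then $T \setminus \{1,2\}$ may have fewer than $3$ elements and the above naive choice fails. In this case the bound $\operatorname{rank}(A) \leq |T| + 4 = 8$ combined with $\operatorname{rank}(A) \geq 8$ forces $\operatorname{rank}(A) = 8$, so equality must hold throughout the rank estimate, and in particular the rows of $A$ indexed by $U$ have rank exactly $2$. Since these rows are $\bm{\xi}_j^T (G^T, HC)$ and $(G^T, HC)$ has full row rank $2$, right multiplication by $(G^T, HC)$ preserves the rank of the matrix with rows $\bm{\xi}_j^T$, $j \in U$. Hence $\{\bm{\xi}_j : j \in U\}$ spans $\mathbb{Q}^2$, so we may pick $i \neq j$ in $U$ with $\operatorname{rank}((\bm{\xi}_i,\bm{\xi}_j)) = 2$. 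Taking $b_1 = i$, $b_2 = j$ and letting $b_3,b_4,b_5$ be any three of the four elements of $T$ finishes the argument, since $\{b_1,b_2\} \subseteq U$ is automatically disjoint from $T$.
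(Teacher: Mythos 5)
Your proof is correct and takes essentially the same approach as the paper's: both reduce to the critical case $\operatorname{rank}(D)=|T|=4$ and then use a row-rank count of $A$ (the rows with $d_j=0$ being of the form $\bm{\xi}_j^T(G^T,HC)$) to force two linearly independent columns $\bm{\xi}_{b_1},\bm{\xi}_{b_2}$ among the indices where $d_j=0$. Your write-up is in fact more careful than the paper's, which negates only the specific choice $b_1=1,b_2=2$ and asserts the bounds $\operatorname{rank}(D)\geq 4$ and $\operatorname{rank}(\bm{\xi}_5,\dots,\bm{\xi}_{n-2})=2$ without the explicit row decomposition you supply.
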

\begin{proof}
We know that as $\operatorname{rank}(B)=2$ we have $\operatorname{rank}(\bm{\xi}_1,\bm{\xi}_2)=2$. So suppose the above statement doesn't happen. Namely $\forall k_1,k_2,k_3\geq 3$ we have $d_{k_1}d_{k_2}d_{k_3}=0$. This gives the upper bound $\operatorname{rank}(D)\leq 4$. But we know from looking at form of $A$ and the fact that $\operatorname{rank}(A)\geq 8$ we have $\operatorname{rank}(D)\geq 4$.\\
\indent So we only have case $\operatorname{rank}(D)=4$ left. We may suppose that $d_1d_2d_3d_4\neq 0$ and $d_j=0$ for $j\geq 4$. But then by looking at the rank of whole matrix we conclude that $\operatorname{rank}(\bm{\xi}_5,\bm{\xi}_6,...,\bm{\xi}_{n-2})=2$. That then obviously gives us a desired quintuple.
\end{proof}
\begin{proof}[ \,Proof of Lemma~{\upshape\ref{lem3}}]
Using Lemma ~{\upshape\ref{lem8}} we know the structure of our exponential sum.
\begin{equation}
       S(\alpha)=\sum_{\substack{ \textbf{x}\leq X \\ \textbf{x}\in \mathbb{N}^2}}
    \sum_{\substack{ \textbf{y}\leq X \\ \textbf{y}\in \mathbb{N}^{n-2}}}\La(\xv) \La(\yv)
     e(\alpha(\xt A_1\xv +2\xt( \bm{\gamma}_1,\bm{\gamma}_2)C\yv+\yv^TD\yv+\yv^TC^THC^T\yv))
\end{equation}
For each $\yv\in \mathbb{N}^{n-2}$ satisfying $\yv\leq X$  we let $\zv=C\yv$ and use orthogonality of exponential function to get
\begin{equation}
    \begin{aligned}
S(\alpha)={} &\int_{[0,1]^2}
    \sum_{\substack{\lvert\textbf{z}\rvert\ll X \\ \textbf{z}\in \mathbb{Z}^2}}
    \sum_{\substack{ \textbf{x}\leq X \\ \textbf{x}\in \mathbb{N}^2}}
    \sum_{\substack{ \textbf{y}\leq X \\ \textbf{y}\in \mathbb{N}^{n-2}}}\La(\xv)\La(\yv)
     e(\alpha(\xt A_1\xv +2\xt( \bm{\gamma}_1,\bm{\gamma}_2)\zv+\yv^TD\yv)\\
     & \times e(\al \zv^TH\zv+ (\yv^TC^T-\zv^T)\be)\, d\bm{\beta}.
    \end{aligned}
\end{equation}
We introduce new sums
\begin{equation}
    \mathcal{F}(\alpha,\bm{\beta})=  \sum_{\substack{\lvert\textbf{z}\rvert\ll X \\ \textbf{z}\in \mathbb{Z}^2}}  \sum_{\substack{ \textbf{x}\leq X \\ \textbf{x}\in \mathbb{N}^2}} \La(\xv)   e(\alpha(\xt A_1\xv+2\xt(\bm{\gamma}_1,\bm{\gamma}_2)\zv+\zv^TH\zv)-\zv^T\be)
\end{equation}
and for $i=1,...,n-2$
\begin{equation}
  f_i(\al,\be)=  \sum_{\substack{y_i\leq X \\ y_i\in \mathbb{N}}}\La(y_i)e(\al d_iy_i^2 +y_i\bm{\xi}_i^T\be)
\end{equation}
where $\bm{\xi}_i$ is the $i$-th column vector of $C$.
We again use CS inequality to get that:
\begin{equation}
    \begin{aligned}
     \int_{\mathfrak{m}} \abs{S(\alpha )}\,d\alpha\leq 
    &\Bigg(\sup_{\substack{\alpha\in \m \\ \bm{\beta}\in [0,1]^2}}\big\vert\prod_{k\neq b_1,b_2,b_3,b_4} f_k(\alpha,\bm{\beta})\big\vert\Bigg)\displaystyle\left(\int_{[0,1]^3}\abs{\mathcal{F}(\alpha,\be)}^2 \,d\al \,d\be \right)^{\tfrac{1}{2}}\\
    &\times\displaystyle\left(\int_{[0,1]^3}\abs{f_{b_1}(\alpha,\bm{\beta})f_{b_2}(\alpha,\bm{\beta})f_{b_3}(\alpha,\bm{\beta})f_{b_4}(\alpha,\bm{\beta})}^2\,d\al \,d\be \right)^{\tfrac{1}{2}}        
    \end{aligned}
\end{equation}
where $b_i$ are chosen according to the rules from the previous lemma. 
The supremum can be bounded using Lemma 3.1 (as $d_{b_5}\neq 0$) by 
\begin{equation}
    \sup_{\substack{\alpha\in \m \\ \bm{\beta}\in [0,1]^2}}\abs{\prod_{k\neq b_1,b_2,b_3,b_4} f_k(\alpha,\bm{\beta})}\ll X^{n-6}L^{-K}.
\end{equation}
First integral can be bounded using Lemma 3.2 and 3.3 by
\begin{equation}
    \ll L^{4} \Bigg\{\substack{2\xt A_1\xv'+ 2\xt (\bm{\gamma}_1,\bm{\gamma}_2)\zv'+2\zv^T (\bm{\gamma}_1,\bm{\gamma}_2)^T\xv'+2\zv^TH\zv'=0\\ 
    z_1=0 \\ z_2=0}\Bigg\}.
\end{equation}
After plugging in $z_1=z_2=0$ we are left with 
\begin{equation}
    \ll L^{4} \Bigg\{2\xt A_1\xv'+ 2\xt (\bm{\gamma}_1,\bm{\gamma}_2)\zv'=0\Bigg\}\ll L^5X^4
\end{equation}
where we again used Lemma 3.4 as we're working with 6 variables and $(\bm{\gamma}_1,\bm{\gamma}_2)$ is invertible. If we look at the second integral and use Lemma 3.2 and 3.3 we obtain the bound:
\begin{equation}
    \ll L^{8} \Bigg\{\substack{d_{b_1}y_{b_1}y_{b_1}'+d_{b_2}y_{b_2}y_{b_2}'+d_{b_3}y_{b_3}y_{b_3}'+d_{b_4}y_{b_4}y_{b_4}'=0\\ y_{b_1}\bm{\xi}_{b_1}+y_{b_2}\bm{\xi}_{b_2}+y_{b_3}\bm{\xi}_{b_3}+y_{b_4}\bm{\xi}_{b_4}=0}\Bigg\}.
\end{equation}
Because $\operatorname{rank}((\bm{\xi}_{b_1},\bm{\xi}_{b_2}))=2$ we can write $y_{b_1}$ and $y_{b_2}$ in terms of $y_{b_3}$ and $y_{b_4}$ and plug in into the first equation. We are left with bilinear equation in 6 variables and as $d_{b_3}d_{b_4}\neq 0$ it has rank at least 2. So after applying Lemma 3.4 we get
\begin{equation}
    \left(\int_{[0,1]^3}\abs{f_{b_1}(\alpha,\bm{\beta})f_{b_2}(\alpha,\bm{\beta})f_{b_3}(\alpha,\bm{\beta})f_{b_4}(\alpha,\bm{\beta})}^2\,d\al \,d\be \right)^{\tfrac{1}{2}} \ll (L^9 X^4)^{1/2}=L^{9/2}X^2
\end{equation}
Once we combine all three estimated we get the desired result and finishes the whole discussion on off diagonal rank 2 matrices.
\end{proof}
\section{Some Thanks}
I would like to thank my supervisor Aled Walker for an indescribable support and Department of Pure Mathematics and Mathematical Statistics for funding my work on this project.

\end{document}